\def\@setcopyright{}
\def\serieslogo@{}
\numberwithin{equation}{section}
\newcommand{\tors}[1]{{#1}_{\rm tors}}
\newcommand{\IG}{{\mathbb G}}
\newcommand{\IQ}{\mathbb{Q}}
\newcommand{\IQbar}{\overline{\IQ}}
\newcommand{\IZ}{{\mathbb Z}} 
\newcommand{\IN}{{\mathbb N}}
\newcommand{\IF}{{\mathbb F}}
\newcommand{\IFbar}{\overline{\IF}}
\newcommand{\sa}[1]{{#1}^{sa}}
\newcommand{\ab}[1]{{#1}^{ab}}
\newcommand{\cyc}[1]{{#1}^{cyc}}
\newcommand{\gl}[2]{{\rm GL}_{#2}({#1})}
\renewcommand{\sl}[2]{{\rm SL}_{#2}({#1})}
\newcommand{\psl}[2]{{\rm PSL}_{#2}({#1})}
\newcommand{\mat}[2]{{\rm Mat}_{#2}({#1})}
\newcommand{\gal}[1]{{\rm Gal}({#1})}
\newcommand{\aut}[1]{{\rm Aut}({#1})}
\newcommand{\cO}{\mathcal{O}}
\newcommand{\cG}{\mathcal{G}}
\newcommand{\ex}[1]{{\rm ex}({#1})}
\DeclareMathOperator{\disc}{disc}
\begin{document}
\baselineskip=14.5pt

\bibliographystyle{plain}

\makeatletter
\def\imod#1{\allowbreak\mkern10mu({\operator@font mod}\,\,#1)}
\makeatother
\newcommand{\vectornorm}[1]{\left|\left|#1\right|\right|}
\newcommand{\ip}[2]{\left\langle#1,#2\right\rangle}
\newcommand{\ideal}[1]{\left\langle#1\right\rangle}
\newcommand{\sep}[0]{^{\textup{sep}}}
\newcommand{\Span}[0]{\operatorname{Span}}
\newcommand{\Tor}[0]{\operatorname{Tor}}
\newcommand{\Stab}[0]{\operatorname{Stab}}
\newcommand{\Orb}[0]{\operatorname{Orb}}
\newcommand{\Soc}[0]{\operatorname{Soc}}
\newcommand{\Kernel}[0]{\operatorname{ker }}
\newcommand{\Gal}[0]{\operatorname{Gal}}
\newcommand{\Aut}[0]{\operatorname{Aut}}
\newcommand{\Out}[0]{\operatorname{Out}}
\newcommand{\Inn}[0]{\operatorname{Inn}}
\newcommand{\lcm}[0]{\operatorname{lcm}}
\newcommand{\Image}[0]{\operatorname{im }}
\newcommand{\vol}[0]{\operatorname{vol }}

\newcommand{\pgl}[0]{\operatorname{PGL}_2(\mathbb{F}_3)}
\newcommand{\GL}[0]{\operatorname{GL}}
\newcommand{\GLnz}[2]{\GL_#1(\ZZ/#2\ZZ)}
\newcommand{\SL}[0]{\operatorname{SL}}
\newcommand{\AGL}[0]{\operatorname{AGL}}
\newcommand{\PGL}[0]{\operatorname{PGL}}
\newcommand{\PSL}[0]{\operatorname{PSL}}
\newcommand{\PSp}[0]{\operatorname{PSp}}
\newcommand{\PSU}[0]{\operatorname{PSU}}
\newcommand{\Sp}[0]{\operatorname{Sp}}
\newcommand{\POmega}[0]{\operatorname{P\Omega}}
\newcommand{\Uup}[0]{\operatorname{U}}
\newcommand{\Gup}[0]{\operatorname{G}}
\newcommand{\mcFup}[0]{\operatorname{F}}
\newcommand{\Eup}[0]{\operatorname{E}}
\newcommand{\Bup}[0]{\operatorname{B}}
\newcommand{\Dup}[0]{\operatorname{D}}
\newcommand{\Mup}[0]{\operatorname{M}}
\newcommand{\mini}[0]{\operatorname{min}}

\newcommand{\maxi}[0]{\textup{max}}
\newcommand{\modu}[0]{\textup{mod}}
\newcommand{\nth}[0]{^\textup{th}}
\newcommand{\sd}[0]{\leq_{sd}}
 \newtheorem{theorem}{Theorem}[section]
 \newtheorem{proposition}[theorem]{Proposition}
 \newtheorem{lemma}[theorem]{Lemma}
 \newtheorem{corollary}[theorem]{Corollary}
 \newtheorem{conjecture}[theorem]{Conjecture}
 \newtheorem{definition}[theorem]{Definition}
 \newtheorem{question}[theorem]{Question}
 \newtheorem*{claim*}{Claim}
 \newtheorem{claim}[theorem]{Claim}
 \newtheorem{example}[theorem]{Example}
 \newtheorem*{example*}{Example}
 \newtheorem{remark}{Remark}
 \newtheorem*{remark*}{Remark}
 \newcommand{\mc}{\mathcal}
 \newcommand{\mf}{\mathfrak}
 \newcommand{\mcA}{\mc{A}}
 \newcommand{\mcB}{\mc{B}}
 \newcommand{\mcC}{\mc{C}}
 \newcommand{\mcD}{\mc{D}}
 \newcommand{\mcE}{\mc{E}}
 \newcommand{\mcF}{\mc{F}}
 \newcommand{\mcG}{\mc{G}}
 \newcommand{\mcH}{\mc{H}}
 \newcommand{\mcI}{\mc{I}}
 \newcommand{\mcJ}{\mc{J}}
 \newcommand{\mcK}{\mc{K}}
 \newcommand{\mcL}{\mc{L}}
 \newcommand{\mcM}{\mc{M}}
 \newcommand{\mcN}{\mc{N}}
 \newcommand{\mcO}{\mc{O}}
 \newcommand{\mcP}{\mc{P}}
 \newcommand{\mcQ}{\mc{Q}}
 \newcommand{\mcR}{\mc{R}}
 \newcommand{\mcS}{\mc{S}} 
 \newcommand{\mcT}{\mc{T}}
 \newcommand{\mcU}{\mc{U}}
 \newcommand{\mcV}{\mc{V}}
 \newcommand{\mcW}{\mc{W}}
 \newcommand{\mcX}{\mc{X}}
 \newcommand{\mcY}{\mc{Y}}
 \newcommand{\mcZ}{\mc{Z}}
 \newcommand{\mfp}{\mf{p}}
 \newcommand{\mfP}{\mf{P}}
 \newcommand{\mfq}{\mf{q}}
 \newcommand{\mfQ}{\mf{Q}} 
 \newcommand{\mfd}{\mf{d}}
 \newcommand{\mfa}{\mf{a}}
 \newcommand{\mfb}{\mf{b}}
 \newcommand{\oQ}{\overline{\QQ}}
 \newcommand{\AAA}{\mathbb{A}}
 \newcommand{\CC}{\mathbb{C}}
 \newcommand{\FF}{\mathbb{F}}
  \newcommand{\GG}{\mathbb{G}}
 \newcommand{\NN}{\mathbb{N}}
 \newcommand{\PP}{\mathbb{P}}
 \newcommand{\QQ}{\mathbb{Q}}
 \newcommand{\QQd}{\QQ^{(d)}}
 \newcommand{\RR}{\mathbb{R}}
 \newcommand{\ZZ}{\mathbb{Z}}
 \newcommand{\hhat}{\hat{h}}
 \newcommand{\oneto}[1]{\{1,\dots,#1\}}
 \newcommand{\QQbar}[0]{\overline{\QQ}}
 \newcommand{\toisom}[0]{\overset{\sim}{\longrightarrow}}
   \def\bbbone{{\mathchoice {\rm 1\mskip-4mu l} {\rm 1\mskip-4mu l}
   {\rm 1\mskip-4.5mu l} {\rm 1\mskip-5mu l}}}

\newcommand{\frob}{{\rm Fr}}

 \renewcommand{\thefootnote}{\fnsymbol{footnote}}

\title[Small points and free abelian groups]{Small points and free abelian groups}
\author[Robert Grizzard]{Robert Grizzard}
\address{\newline University of Texas at Austin \newline Department of Mathematics, RLM 8.100 \newline 2515 Speedway, Stop C1200 \newline Austin, TX 78712-1202}
\email{rgrizzard@math.utexas.edu}
\author[Philipp Habegger]{Philipp Habegger}
\address{\newline{Fachbereich Mathematik}\newline
{Technische Universit\"at Darmstadt}\newline
 Schlossgartenstra\ss e 7 \newline
{64289 Darmstadt, Germany}}
\email{habegger@mathematik.tu-darmstadt.de}
\author[Lukas Pottmeyer]{Lukas Pottmeyer}
\address{\newline{Fachbereich Mathematik}\newline
{Technische Universit\"at Darmstadt}\newline
 Schlossgartenstra\ss e 7 \newline
{64289 Darmstadt, Germany}}
\email{pottmeyer@mathematik.tu-darmstadt.de}
\today~~\currenttime~~CDT

\begin{abstract}
Let $F$ be an algebraic extension of the rational numbers and $E$ an
elliptic curve defined over some number field contained in $F$. The
absolute logarithmic Weil height, respectively the N\'eron-Tate
height, induces a norm on $F^*$ modulo torsion, respectively on $E(F)$
modulo torsion. The groups $F^*$ and $E(F)$ are free abelian modulo
torsion if the height function does not attain arbitrarily small positive values. In this paper we prove the failure of the converse to this statement by explicitly constructing counterexamples.
\end{abstract}

\subjclass[2010]{11G50, 11G05 (primary); 20K20 (secondary)}
\keywords{Heights and small points, Elliptic curves}
\maketitle

\tableofcontents

\section{Introduction}

Throughout the text we fix an algebraic closure
$\QQbar$ of $\QQ$, and all algebraic extensions of $\QQ$ are assumed
to be subfields of $\QQbar$. One can ask for which fields $F$ the
multiplicative group $F^*$ is free modulo torsion, 
 we call an abelian group $G$ free modulo torsion if 
$G/\tors{G}$
is a free abelian group where
 $\tors{G}$ denotes the torsion subgroup of $G$.
In the rational case
$\QQ^*$ is free modulo torsion as $\IZ$ is a unique factorization
domain. More generally, using classical ideal factorization theory and
Dirichlet's Unit Theorem  one can prove that $K^*$ is free modulo torsion for any number
field $K$. 

Before we recall more advanced examples, we have to fix some notation. For any subfield $F\subseteq\IQbar$, let $F^{ab}$ denote the maximal abelian extension of $F$, and $F^{(d)}$ denote the compositum of all extensions of $F$ of degree at most $d$. Moreover, we let $\QQ^{tr}$  denote the maximal totally real field extension of $\QQ$.

Iwasawa \cite{Iwasawa1953} proved  that $(K^{ab})^*$ is free modulo
 torsion.
 Some years later Schenkman \cite{Schenkman1964} showed that $(\QQ^{(d)})^*$ is free modulo
 torsion for all positive integers $d$. 
 May \cite{May72} rediscovered Iwasawa's result and combined it with
 Schenkman's result to show that $((K^{(d)})^{ab})^*$ is free modulo
 torsion for all number fields $K$ and all positive integers $d$. Another class of fields $F$ such
 that $F^*$ is free modulo torsion consists of all Galois extensions
 of $\QQ$ which contain only finitely many roots of unity. Horie's paper
 \cite{Horie1990} contains this result, it 
appears to
 be the origin. We  immediately see that $(\QQ^{tr})^*$ is free modulo torsion.

A  related problem is to find algebraic extensions $F/K$
such that the Mordell-Weil group $E(F)$ is free modulo torsion for a
given elliptic curve $E$ defined over  $K$. 
This is clearly the case by the Mordell-Weil Theorem if $F$ is a number field.
Here too the interest lies in infinite extensions $F/K$. 

We now introduce some notation to unify the multiplicative and elliptic cases. 
Let $\mcG$ denote either the algebraic torus $\mathbb{G}_m$ or an
elliptic curve defined over a number field $K$. We will usually
suppose $F\supseteq K$.

If $\mcG = \GG_m$ is a torus, we take the absolute logarithmic Weil
 height to be the canonical height on $\mcG$.
 If $\mcG$ is an elliptic curve,  the canonical height on $\mcG$ is
 understood to be the N\'{e}ron-Tate height $\widehat{h}:\mcG(\QQbar) \to
 [0,\infty)$.  For the definitions and basic properties of these
 heights, we refer to Bombieri and Gubler's book \cite{BG}.  
 Our N\'eron-Tate height is twice the height used by Silverman
in \S 9, Chapter VIII \cite{Silverman:AEC}. 
The canonical height is well-defined modulo
 torsion, i.e. it factors through to a mapping
 $\mcG(\QQbar)/\tors{\mcG(\QQbar)}\rightarrow [0,\infty)$. 
We observe that the group
 $\mcG(\QQbar)/\tors{\mcG(\QQbar)}$ is divisible and torsion
 free. Thus it carries the structure of a
 $\QQ$-vector space.
If $\mcG=\IG_m$, then the canonical height is a norm on this vector
 space. 
In the case of an elliptic curve, its square root $\widehat{h}^{1/2}$
 is one. 

The basis of our investigation is the apparent coincidence that, apart from $(K^{(d)})^{ab}$, all fields we  
discussed above are known to share another property, the Bogomolov
property,  related to the Weil
height. We briefly describe this property. 

We say that $F$ has the Bogomolov property with respect to
$\mcG$ if there exists $\epsilon > 0$ such that the
 canonical height of a non-torsion points of $\mcG(F)$ is
at least $\epsilon$. 
We recall that torsion points are exactly the
points of height zero.
This property was coined by Bombieri and Zannier \cite{BZ2} who worked
in the multiplicative setting.

The fields $K^{ab}$, $\QQ^{(d)}$ and $\QQ^{tr}$ have the
Bogomolov property relative to $\mcG$ 
\cite{AmorosoDvornicich}, \cite{az1}, \cite{bakersilverman}, \cite{BZ2}, \cite{Baker2005}, \cite{Schinzel1973}, \cite{ZhangEquidist}.
It is an open question, posed
in more general form by Amoroso, David, and Zannier   \cite[Problem 1.4]{ADZ:propB},
 whether $(K^{(d)})^{ab}$ has
the Bogomolov property with respect to $\mcG$.
Recently, the second-named author established another class of fields
having the Bogomolov property \cite{Habegger2013}. If $\mcG$ is an elliptic curve defined
over $\QQ$ then the field generated over $\IQ$ by $\tors{\mcG(\IQbar)}$ has the Bogomolov property with
respect to the torus and $\mcG$.

Note that neither the Bogomolov property of a field $F$ nor the property that
$\mcG(F)/\tors{\mcG(F)}$ is free abelian is preserved under finite
extensions $F'/F$. A counterexample for both properties
when $\mcG=\IG_m$  is the extension $\QQ^{tr}
(i)/\QQ^{tr}$. 
That the stated properties are not preserved in this extension was
first observed  by Amoroso and Nuccio \cite{Amoroso2007} and
May \cite[Example 1]{May72}, respectively. A counterexample in the elliptic curve case is presented by the third-named author, cf. \cite[Example 5.7]{PottmeyerRamifi}.

Recall that a norm on an abelian group is called discrete if zero is
 an isolated value of
its image.
Lawrence \cite{lawrence} and Zorzitto \cite{zorzitto} showed that a
 countable abelian group is free abelian if and only if it admits a
 discrete norm. 
The countability condition was later removed by
 Stepr{\=a}ns \cite{steprans}, but  the groups considered in this paper are
 countable. These results  immediately imply the
 following proposition.

\begin{proposition}\label{bfree}
If $F$ is a subfield of $\QQbar$ with $F\supseteq K$ that satisfies the Bogomolov property with
respect to $\mcG$, then $\mcG(F)/\tors{\mcG(F)}$ is free abelian.
\end{proposition}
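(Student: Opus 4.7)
The plan is to invoke the Lawrence--Zorzitto theorem stated in the text immediately preceding the proposition: a countable torsion-free abelian group admitting a discrete norm is free abelian. So the proof reduces to exhibiting such a norm on $G := \mcG(F)/\tors{\mcG(F)}$.

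First I would dispense with the countability and torsion-freeness hypotheses. By construction $G$ is torsion-free. Since $F \subseteq \QQbar$ is countable, so is $\mcG(F)$, and hence so is its quotient $G$.

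Next I would take the norm to be $N(P) := \widehat{h}(P)$ in the case $\mcG = \GG_m$ and $N(P) := \widehat{h}(P)^{1/2}$ in the elliptic case. This is well defined on $G$ because the canonical height vanishes precisely on torsion points and descends to the quotient. The text has already observed that $N$ defines a norm on the $\QQ$-vector space $\mcG(\QQbar)/\tors{\mcG(\QQbar)}$; restricting to the subgroup $G$ gives a group-theoretic norm, for which the three axioms hold: positivity (a non-torsion point has strictly positive canonical height), integer-homogeneity (from $\widehat{h}(nP) = n^{2}\widehat{h}(P)$ in the elliptic case and $h(x^{n}) = |n|\,h(x)$ in the torus case), and the triangle inequality (submultiplicativity of the Weil height in the torus case, and the fact that $\widehat{h}^{1/2}$ is a norm on $\mcG(\QQbar)\otimes_{\ZZ}\RR$ in the elliptic case).

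Finally, the Bogomolov hypothesis produces $\epsilon > 0$ such that $\widehat{h}(P) \geq \epsilon$ for every non-torsion $P \in \mcG(F)$. Translated to $N$, this says $N(Q) \geq \epsilon$ in the torus case (respectively $N(Q) \geq \epsilon^{1/2}$ in the elliptic case) for every non-zero $Q \in G$. Thus $0$ is an isolated value of the image of $N$, i.e.\ the norm is discrete, and the Lawrence--Zorzitto theorem yields that $G$ is free abelian. There is no serious obstacle here: the argument is essentially a translation between the height-theoretic language of the Bogomolov property and the hypotheses of the cited theorem. The only point requiring any attention is the verification that $N$ satisfies the axioms of a group-theoretic norm on $G$, which is immediate from the standard properties of the canonical height.
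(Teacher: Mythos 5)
Your proof is correct and takes exactly the route the paper intends: the paper attributes the proposition to an "immediate" application of the Lawrence--Zorzitto--Stepr{\=a}ns theorem, and you have simply spelled out the routine verifications (countability of $\mcG(F)$, torsion-freeness of the quotient, that $\widehat{h}$ resp.\ $\widehat{h}^{1/2}$ is a discrete norm under the Bogomolov hypothesis) that the paper leaves implicit.
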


Our aim is to  discuss the failure of the converse of this
statement. We will prove that the converse does not hold by explicitly
constructing counterexamples in the cases where $\mcG$ is
$\IG_m$,  an elliptic curve with complex multiplication (CM), and for an arbitrary elliptic curve defined over $\QQ$. In other words, in these cases we construct fields $F$ where $\mcG(F)/\tors{\mcG(F)}$ is free abelian, but there are points of arbitrarily small positive canonical height on $\mcG(F)$.
Here and in the rest of this paper, an elliptic curve is said to have
CM over $K$ if the ring of endomorphisms of $E$ which are defined over
$K$ is strictly larger than $\IZ$. The elliptic curve has CM if it has
CM over some number field.

Now we can formulate our main result.

\begin{theorem}\label{thm:intro}
Let $\mcG$ be $\IG_m$ or an elliptic curve. We suppose that $\mcG$ is
 defined over a number field $K$. 
There is an algebraic extension $F/K$ such that
 $\mcG(F)/\tors{\mcG(F)}$ is free abelian but $F$ does not have the
 Bogomolov property with respect to $\mcG$ 
\begin{enumerate}
\item 
if $\mcG$ is $\IG_m$,
\item
if $\mcG$ is an elliptic curve with CM over $K$, and
\item
if $\mcG$ is an elliptic curve without CM, and $K = \QQ$.  In this case $\tors{\mcG(F)}$ is finite.
\end{enumerate}
\end{theorem}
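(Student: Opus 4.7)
The overall strategy is to build $F$ as an ascending union $F = \bigcup_{n \geq 0} F_n$ of algebraic extensions of $K$, where $F_0$ is chosen so that $\mcG(F_0)/\tors{\mcG(F_0)}$ is already free abelian, and each $F_{n+1}$ is obtained from $F_n$ by adjoining a single carefully chosen point $P_n \in \mcG(\QQbar)$ of positive canonical height satisfying $\hat h(P_n) < 1/n$. Such a union automatically makes $F$ fail the Bogomolov property with respect to $\mcG$, so the real content of the theorem is to arrange the chain so that $\mcG(F)/\tors{\mcG(F)}$ is still free abelian, and in case (3) additionally that $\tors{\mcG(F)}$ remains finite.

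For the base field I would take $F_0 = \QQ^{ab}$ in case (1) and $F_0 = K^{ab}$ in case (2); these have the Bogomolov property with respect to $\mcG$ by Amoroso--Dvornicich and by the Baker--Silverman results respectively, so $\mcG(F_0)/\tors{\mcG(F_0)}$ is free abelian by Proposition \ref{bfree}. In case (3) the constraint that $\tors{\mcG(F)}$ be finite forces a much smaller starting field: I would simply take $F_0 = \QQ$, whose Mordell--Weil group is finitely generated and hence trivially free modulo torsion.

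The central technical task is to choose each $P_n$ so that at every step the inclusion $\mcG(F_n)/\tors{\mcG(F_n)} \hookrightarrow \mcG(F_{n+1})/\tors{\mcG(F_{n+1})}$ splits with a free abelian complement $H_n$. If such splittings can be found uniformly, a telescoping argument gives
\[
\mcG(F)/\tors{\mcG(F)} \cong \mcG(F_0)/\tors{\mcG(F_0)} \oplus \bigoplus_{n\geq 0} H_n,
\]
which is free abelian as a direct sum of free abelian groups. Natural candidates for $P_n$ producing small height are an $m_n$-th root of a moderate-height element in the multiplicative case, or an $m_n$-division point of a non-torsion point of $\mcG(F_n)$ in the elliptic case, with $m_n \to \infty$ driving $\hat h(P_n) \to 0$.

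The hardest part will be case (3): besides verifying the splitness condition on the Mordell--Weil groups, we must also guarantee that adjoining each $P_n$ introduces no new torsion of $\mcG$. For this I would invoke Serre's open image theorem for non-CM elliptic curves over $\QQ$: for all sufficiently large primes $\ell$, the mod-$\ell$ Galois representation of $\mcG$ is surjective, and $[\QQ(\mcG[\ell]):\QQ] = (\ell^2-1)(\ell^2-\ell)$ far exceeds the degree over $\QQ$ of the field generated by any single preimage under $[\ell]$ of a non-torsion point of $\mcG(\QQ)$. Choosing the $m_n$ to be distinct, sufficiently large primes $\ell_n$, one can inductively ensure that $F_n$ picks up no non-trivial $\ell_n$-torsion of $\mcG$. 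Making this torsion-avoidance simultaneously compatible with the splitness/freeness condition on the Mordell--Weil quotients at each step is the main obstacle I anticipate.
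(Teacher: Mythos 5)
Your overall plan---build $F$ as an ascending union $F = \bigcup_n F_n$ with $F_{n+1} = F_n(P_n)$, $\hat h(P_n) \to 0$, and prove freeness via telescoping direct-sum decompositions---differs substantially from the paper's construction and, as written, contains a genuine gap that I do not think can be repaired without essentially abandoning the direct-sum strategy.

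The paper constructs a \emph{single large} field $F = \bigl(\bigl(K^{ab}\bigr)^{sa}\bigr)^{(d)}$ (adjoin to $K^{ab}$ all roots of polynomials with symmetric/alternating Galois group, then take the degree-$\leq d$ compositum). Freeness is established not by exhibiting direct-sum complements, but via Pontryagin's criterion (Theorem \ref{Pontryagin}) in the form of Criterion B (Proposition \ref{finreltors}): one only needs every finite subset of $\mcG(F)/\tors{\mcG(F)}$ to lie in a \emph{pure free} subgroup, which is obtained from a bounded-exponent statement about $\tors{(\mcG(F)/\mcG(M))}$ for appropriate finite subextensions $M$. Verifying that bound (Lemma \ref{lem:finitetornonCM}) is where Serre's theorem and the structure theory of $\GL_2(\ZZ/p^n\ZZ)$ enter. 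The small points are then produced separately, inside this fixed $F$, via Osada's trinomials (for $\GG_m$) and, for elliptic curves, via a trinomial-like perturbation of the multiplication-by-$q^\nu$ map at a supersingular prime, using Abhyankar's computation of Galois groups over $\IFbar_p(T)$ together with the Dvornicich--Zannier and Zannier versions of Hilbert irreducibility to specialize $T$ to a root of unity.

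Here is the concrete gap in your proposal. You want, at each step, a splitting
\[
\mcG(F_{n+1})/\tors{\mcG(F_{n+1})} \;\cong\; \mcG(F_n)/\tors{\mcG(F_n)} \;\oplus\; H_n,
\]
with $H_n$ free. For this to exist it is necessary that $\mcG(F_n)/\tors{\mcG(F_n)}$ be a \emph{pure} subgroup of $\mcG(F_{n+1})/\tors{\mcG(F_{n+1})}$, i.e.\ the quotient must be torsion-free. But your recipe for producing small points is precisely to adjoin $P_n$ with $[m_n]P_n = Q_n \in \mcG(F_n)$ (or $P_n^{m_n} = \alpha_n$ in the torus case), and one must take $Q_n$ \emph{not} divisible by $m_n$ in $\mcG(F_n)$ modulo torsion---otherwise $P_n$ already lies in $\mcG(F_n)$ modulo torsion and you gain nothing. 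So the class of $P_n$ in $\mcG(F_{n+1})/\tors{\mcG(F_{n+1})}$ is a new element whose $m_n$-th multiple falls into $\mcG(F_n)/\tors{\mcG(F_n)}$ without being $m_n$-divisible there: this is exactly a failure of purity, hence a failure of the splitting you need. Your own construction thus systematically breaks the hypothesis on which the telescoping argument rests. A further difficulty in cases (1) and (2) is that your base field $F_0 = K^{ab}$ is an infinite extension, so $\mcG(F_n)$ is infinite-rank, and nothing like Mordell--Weil is available to keep each intermediate $\mcG(F_n)/\tors{\mcG(F_n)}$ free: as the paper points out, freeness modulo torsion is \emph{not} preserved by finite extensions (cf.\ $\QQ^{tr}(i)/\QQ^{tr}$), so one cannot simply assert that each $F_n$ is still well behaved. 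Finally, in case (3) your degree-count argument via Serre handles $\ell_n$-torsion for the single prime $\ell_n$ used at step $n$, but one must also rule out picking up $m$-torsion for other (small) $m$ at each step, and again over the infinite union; the paper's treatment (Lemma \ref{lem:finitetornonCM}, part (1)) does exactly this via a finer composition-factor analysis of the Galois images, which your sketch does not replicate.

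In short: the direct-sum strategy requires the extensions $F_{n+1}/F_n$ to be ``pure at the Mordell--Weil level,'' which is incompatible with producing the small points by taking division points; the paper sidesteps this entirely by using the weaker Pontryagin-style criterion, which asks only for pure \emph{finite-rank} free subgroups containing any given finite set, together with a bound on the exponent of the relative torsion. I would recommend reformulating your freeness argument along those lines rather than insisting on direct-summand complements.
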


We will give a more precise description of $F$ below. We refer to
Proposition \ref{prop:Gmexplicit} for part (1) and
Proposition \ref{prop:ellcurveexplicit}  for parts (2) and (3).

The proof in Theorem \ref{thm:intro} that the group in question is free abelian  is given
in Section \ref{free} after providing two criteria for
the freeness of abelian groups. 
Applying the criteria involves investigating  the structure of various
Galois groups.
For example,   part (3) requires information on the Galois group
of $K(\tors{\mcG(\IQbar)})/K$. Therefore, 
Serre's \cite{Serre72} famous theorem plays an important role in our
argument. 

In the remaining sections we show that the field $F$ does not satisfy
the Bogomolov property with respect to the canonical height on $\mcG$
in  the three cases described in Theorem \ref{thm:intro}. To do
this, we explicitly describe how to construct points of arbitrarily
small height which are defined over the field $F$.
In the multiplicative setting we work with roots of 
\begin{equation}
\label{eq:osadapoly}
X^n-X-1\quad\text{if $n\ge 2$.}
\end{equation}
These roots have small Weil height as $n$ tends to infinity by basic
height inequalities. Moreover,
Osada \cite{Osada} proved that the Galois group attached to the
splitting field of \eqref{eq:osadapoly} over the rationals is the
full symmetric group $S_n$. This is enough Galois theoretic
information to apply one of the two criteria mentioned above. Indeed,
we will
conclude that our roots are  in a common field whose multiplicative
group is free modulo torsion. 

The basic approach for an elliptic curve $E$  is similar but more involved. 
The roots now come from a polynomial equality, not unsimilar to
\eqref{eq:osadapoly}, that involves the multiplication-by-$n$ endomorphism
of $E$. The difficulty here lies in determining the Galois group of the
 associated splitting field. To facilitate this we introduce a new
 variable $T$ and assume that $n$ is
 the power of a prime $p$.
Moreover, we will suppose that $E$ has
 supersingular reduction above a place with residue characteristic
 $p$. The
polynomial reduced modulo a place above $p$ lies in
$\IFbar_p(T)[X]$, with $\IFbar_p$ an algebraic closure of $\IF_p$, and takes on a particularly simple form, it
is a trinomial.
We then use a result of Abhyankar \cite{Abhyankar} to conclude that the reduced
polynomial  is irreducible. This
result also provides
sufficient information on its Galois group.

Back in characteristic zero we will use  variants of Hilbert's
irreducibility theorem due to Dvornicich-Zannier \cite{DvZa07} and
Zannier \cite{Za10}. This
allows us
to specialize $T$ to a root of unity. 
This will often preserve irreducibility and the Galois structure of
the splitting field without increasing the height too much. The latter 
observation is due to the fact that  torsion points have canonical
height zero.
 The criteria mentioned above apply again. 

In the non-CM case the curve $E$ has supersingular reduction at
infinitely many primes by a theorem of Elkies \cite{Elkies87}. 
In the CM case we have infinitely many admissible primes by more
classical results. 
 In both cases we obtain a sequence of  points with small N\'eron-Tate
 height which are sufficient for Theorem \ref{thm:intro}.

As a byproduct of our labor we
exhibit  infinite extensions of the rationals over which a given
elliptic curve without CM has only finitely many points of finite order,
cf. part 1 of Lemma \ref{lem:finitetornonCM}.

The authors thank Will Sawin for answering a question on mathoverflow
leading to the proof of Lemma \ref{will}, and Paul Fili for providing
the reference \cite{Iwasawa1953}.  Thanks are also due to Ga\"{e}l
R\'{e}mond and Umberto Zannier for providing feedback on an earlier draft of this paper,
and to Jeffrey Vaaler, who  observed
Proposition \ref{bfree} as a consequence of the
Lawrence-Zorzitto-Stepr{\=a}ns Theorem.
The second-named author was partially supported by the National Science Foundation under
agreement No.~DMS-1128155. Any opinions, findings and conclusions or
recommendations expressed in this material are those of the authors
and do not necessarily reflect the views of the National Science
Foundation. 
The third-named author was supported by the DFG project ``Heights and
unlikely intersections'' HA~6828/1-1.
The  authors thank the Institute for Advanced Study in Princeton  for providing a stimulating and encouraging environment.

\section{Some Group Theory}\label{free}
\subsection{Free Abelian Criteria}

Recall that a subgroup $H$ of an abelian group $G$ is called
\emph{pure} if $G/H$ is torsion-free.  The following version of
Pontryagin's result on free abelian groups is proved (in a stronger form) in \cite[Theorem 2.3, Chapter IV]{eklofmekler}.

\begin{theorem}[Pontryagin's Criterion]\label{Pontryagin}
Let $G$ be a countable abelian group.  The following are equivalent:
\begin{enumerate}
\item $G$ is free abelian;
\item every finite subset of $G$ is contained in a pure free abelian subgroup of $G$;
\item every finite rank subgroup of $G$ is free abelian.
\end{enumerate}
\end{theorem}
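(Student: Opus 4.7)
The plan is to prove the equivalences via the chain (1) $\Rightarrow$ (2) $\Rightarrow$ (3) $\Rightarrow$ (1). The first two implications are essentially formal consequences of the definitions; the real content is the implication (3) $\Rightarrow$ (1), which is Pontryagin's classical theorem and will be the main obstacle.

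For (1) $\Rightarrow$ (2), I would write $G$ with a basis $\{e_i\}_{i \in I}$. Any finite subset of $G$ involves only finitely many basis elements $e_{i_1},\ldots,e_{i_k}$. The subgroup they generate is a direct summand of $G$, and is therefore both pure and free abelian.

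For (2) $\Rightarrow$ (3), let $H \leq G$ be a subgroup of finite rank $r$. I would choose $r$ elements $h_1,\ldots,h_r \in H$ whose images in $H \otimes_{\IZ} \IQ$ are linearly independent, so that the subgroup $H_0 = \langle h_1,\ldots,h_r\rangle$ satisfies that $H/H_0$ is torsion. Applying (2) to $\{h_1,\ldots,h_r\}$ yields a pure free abelian subgroup $P \subseteq G$ containing $H_0$. For any $h \in H$ there exists a positive integer $m$ with $mh \in H_0 \subseteq P$, and the purity of $P$ in $G$ then forces $h \in P$. Hence $H \subseteq P$, and since subgroups of free abelian groups are free abelian, $H$ itself is free abelian.

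For (3) $\Rightarrow$ (1), I would enumerate $G = \{g_1, g_2, \ldots\}$ and construct inductively an ascending chain of pure subgroups $0 = G_0 \subseteq G_1 \subseteq G_2 \subseteq \cdots$ with $g_n \in G_n$ and each $G_n$ of finite rank. Specifically, take $G_{n+1}$ to be the purification in $G$ of $G_n + \IZ g_{n+1}$, that is, the set of $h \in G$ admitting some positive multiple in $G_n + \IZ g_{n+1}$. By construction $G_{n+1}$ is pure in $G$, contains $G_n \cup \{g_{n+1}\}$, and has rank at most $\mathrm{rank}(G_n) + 1$. Hypothesis (3) then guarantees that every $G_n$ is free abelian, and hence finitely generated. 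Since $G_n$ is pure in $G$ (and therefore in $G_{n+1}$), the quotient $G_{n+1}/G_n$ is a finitely generated torsion-free abelian group, hence free. Lifting bases of the successive quotients $G_{n+1}/G_n$ back to $G_{n+1}$ and concatenating them with a basis of $G_1$ produces a basis of $G = \bigcup_n G_n$, exhibiting $G$ as free abelian.

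The delicate points are all concentrated in the last implication: verifying that the purification is actually a pure subgroup, that its rank does not jump by more than one, and that the successive quotients $G_{n+1}/G_n$ are not merely torsion-free but free — the last of these is where the hypothesis (3) is used essentially, since it is what ensures each $G_n$ is not only of finite rank but finitely generated, so that the finitely generated torsion-free quotient $G_{n+1}/G_n$ is genuinely free.
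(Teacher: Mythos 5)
Your proof is correct and complete. Note that the paper itself does not prove Theorem~\ref{Pontryagin}; it cites \cite[Theorem 2.3, Chapter IV]{eklofmekler}, where a stronger form is established, so there is no in-paper argument to compare against. Your blind argument is the standard proof of Pontryagin's criterion, and you locate the crux correctly: in $(3)\Rightarrow(1)$ one exhausts $G$ by an ascending chain $G_0\subseteq G_1\subseteq\cdots$ of pure finite-rank subgroups built by purification; hypothesis (3) upgrades each $G_n$ from finite rank to free of finite rank, hence finitely generated; purity of $G_n$ in $G$ makes $G_{n+1}/G_n$ torsion-free, and finite generation of $G_{n+1}$ makes it finitely generated, hence free; so the short exact sequences $0\to G_n\to G_{n+1}\to G_{n+1}/G_n\to 0$ split coherently and the lifted bases assemble into a basis of $G=\bigcup_n G_n$. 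The two auxiliary implications are also fine: in $(1)\Rightarrow(2)$ a subset of a basis generates a direct summand, which is automatically pure and free; in $(2)\Rightarrow(3)$ you correctly use that $H/\langle h_1,\dots,h_r\rangle$ is torsion together with purity of $P$ to get $H\subseteq P$, and then that subgroups of free abelian groups are free. One small point you leave tacit, though it causes no harm, is that each of (2) and (3) already forces $G$ to be torsion-free (a pure free subgroup must contain every torsion element of $G$ yet has none; a nonzero torsion element generates a finite-rank subgroup that is not free), which is quietly used when you treat the purification and the torsion-free quotients.
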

From this result the following two useful criteria are easily derived.  They will be applied in proving Theorem \ref{thm:intro}.

\begin{proposition}[Criterion A]\label{fintors}
Let $\mcG$ be $\IG_m$ or an elliptic curve. We suppose that $\mcG$ is
 defined over a number field $K$ and assume that $F$ is a Galois extension of $K$.
  If $\tors{\mcG(F)}$ is finite, then $\mcG(F)/\tors{\mcG(F)}$ is free abelian.  
\end{proposition}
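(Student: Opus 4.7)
The plan is to verify condition~(2) of Pontryagin's Criterion (Theorem~\ref{Pontryagin}): every finite subset of $G:=\mcG(F)/\tors{\mcG(F)}$ should lie in some pure free abelian subgroup of $G$. Write $T:=\tors{\mcG(F)}$, so $G$ is a countable torsion-free abelian group and the hypothesis says $T$ is finite. The key idea is to Galois-descend finitely many elements to a finite Galois intermediate field and then exploit that $\Gal(F/L)$-invariants are automatically pure and close to $\mcG(L)/T$.

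Given $g_1,\dots,g_m\in G$, I would first pick lifts $\tilde g_i\in\mcG(F)$ and enlarge the fields of definition of the $\tilde g_i$ together with the (finitely many) points of $T$ to a common finite Galois extension $L/K$ inside $F$. This is possible because $F/K$ is algebraic and Galois, so any finite subset of $F$ lies in some finite Galois subextension. Thus $\tilde g_1,\dots,\tilde g_m\in\mcG(L)$ and $T\subseteq\mcG(L)$; in particular $T=\tors{\mcG(L)}$. I would then define
\begin{equation}
B:=G^{\Gal(F/L)}.
\end{equation}
Then $\mcG(L)/T\subseteq B$, so each $g_i\in B$, and $B$ is automatically pure in $G$: if $ny\in B$ with $y\in G$ and $n\ge 1$, then for each $\sigma\in\Gal(F/L)$ we have $n(\sigma y-y)=\sigma(ny)-ny=0$, and $G$ being torsion-free forces $\sigma y=y$, i.e.\ $y\in B$.

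The main step, and the crux of the argument, is to show $B$ itself is free abelian. For this I would apply $\Gal(F/L)$-cohomology to the short exact sequence $0\to T\to\mcG(F)\to G\to 0$ of $\Gal(F/L)$-modules. Since $T\subseteq\mcG(L)$ the action of $\Gal(F/L)$ on $T$ is trivial, and $\mcG(F)^{\Gal(F/L)}=\mcG(L)$ by Galois descent. The long exact sequence in cohomology therefore yields an injection
\begin{equation}
B/(\mcG(L)/T)\hookrightarrow H^1(\Gal(F/L),T)=\operatorname{Hom}_{\mathrm{cts}}(\Gal(F/L),T).
\end{equation}
Every continuous homomorphism from the profinite group $\Gal(F/L)$ into the finite discrete group $T$ factors through a finite quotient, hence has exponent dividing $e:=\exp(T)$. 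Thus $B/(\mcG(L)/T)$ has exponent dividing $e$ and $eB\subseteq\mcG(L)/T$. Since $G$ is torsion-free, multiplication by $e$ is injective on $B$, realising $B$ as a subgroup of $\mcG(L)/T$. Now $\mcG(L)/T=\mcG(L)/\tors{\mcG(L)}$ is free abelian --- by unique factorization combined with Dirichlet's unit theorem when $\mcG=\IG_m$, and by the Mordell-Weil theorem when $\mcG$ is an elliptic curve --- so $B$, being a subgroup of a free abelian group, is itself free abelian. This is where both hypotheses enter crucially: finiteness of $T$ bounds the exponent of $\operatorname{Hom}_{\mathrm{cts}}(\Gal(F/L),T)$, and the Galois assumption on $F/K$ lets us choose $L/K$ Galois and use the descent identity. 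Pontryagin's Criterion then yields that $G$ is free abelian.
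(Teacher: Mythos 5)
Your proof is correct, and it is genuinely different from what the paper does: the paper proves Criterion A only by citation, referring to Horie (Proposition 1 of \cite{Horie1990}) for $\GG_m$ and Moon (Proposition 3 of \cite{Moon12}) for abelian varieties, whereas you supply a self-contained argument. Your argument in fact closely parallels the paper's own proof of Criterion B (Proposition \ref{finreltors}): there, given a finite Galois $M\subseteq F$ and $m = \ex{\tors{(\mcG(F)/\mcG(M))}}$, the pure subgroup used is $H = \{\overline{P}: [m]\overline{P}\in\mcG(M)/\tors{\mcG(M)}\}$. Your $B = G^{\Gal(F/L)}$ coincides with that $H$ (with $M=L$, $m=e$): the inclusion $B\subseteq H$ is exactly your cohomological bound $eB\subseteq\mcG(L)/T$, and the reverse inclusion $H\subseteq B$ follows from $e(\sigma y - y) = 0$ in the torsion-free $G$. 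The real content you add, which Criterion B assumes as hypothesis (b), is the derivation of the exponent bound from finiteness of $T$ via the injection $B/(\mcG(L)/T)\hookrightarrow H^1(\Gal(F/L),T) = \operatorname{Hom}_{\mathrm{cts}}(\Gal(F/L),T)$. This is precisely the point at which finiteness of $\tors{\mcG(F)}$ enters, and it shows in a transparent way how Criterion A is a special case of the mechanism behind Criterion B. One small stylistic remark: you do not actually need the observation that continuous homomorphisms factor through finite quotients; any homomorphism into a group of exponent $e$ is killed by $e$, so $\operatorname{Hom}(\Gal(F/L),T)$ has exponent dividing $e$ regardless of topology. Everything else — the choice of a finite Galois $L/K$ inside $F$ containing the lifts and $T$, the purity of $B$, the identification $\mcG(F)^{\Gal(F/L)}=\mcG(L)$, and the conclusion that $B\cong eB$ embeds in the free abelian group $\mcG(L)/\tors{\mcG(L)}$ — is sound.
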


\begin{proof}
  We refer to \cite[Proposition 1]{Horie1990} for the $\GG_m$ case, and to \cite[Proposition 3]{Moon12} for the case of an elliptic curve (or, more generally, an abelian variety).
 \end{proof}

\begin{proposition}[Criterion B]\label{finreltors}
Let $\mcG$ be $\IG_m$ or an elliptic curve. We suppose that $\mcG$ is
 defined over a number field $K$,
 and let $F_0$ and $F$ be algebraic extensions of $K$ with $K\subseteq F_0 \subseteq F$.  If every finite subextension of $F/F_0$ is contained in a finite subextension $M$ of $F/F_0$ such that 
\begin{enumerate}
\item[(a)] $\mcG(M)/\tors{\mcG(M)}$ is free abelian, and 
\item[(b)] the torsion subgroup of $\mcG(F)/\mcG(M)$ has finite exponent,
\end{enumerate}
then $\mcG(F)/\tors{\mcG(F)}$ is free abelian.
If, furthermore, $F/F_0$ is Galois, then $\mcG(F)/\tors{\mcG(F)}$ is free abelian if (1) and (2) are satisfied for all finite Galois extensions $M/F_0$, with $M\subseteq F$.
\end{proposition}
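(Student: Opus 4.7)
The plan is to verify condition (2) of Pontryagin's Criterion (Theorem~\ref{Pontryagin}) for the countable torsion-free abelian group $G := \mcG(F)/\tors{\mcG(F)}$. Fix a finite subset $S \subseteq G$ and lift it to a finite $\widetilde{S} \subseteq \mcG(F)$; the coordinates of $\widetilde{S}$ generate a finite subextension $L$ of $F/F_0$. By the hypothesis we may enlarge $L$ to a finite subextension $M/F_0$ of $F/F_0$ satisfying (a) and (b). For the Galois supplement one first replaces $L$ by its Galois closure inside $F$, which is still finite over $F_0$ since $F/F_0$ is Galois, before invoking the weaker hypothesis.

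Let $H \subseteq G$ denote the image of $\mcG(M)$ under the canonical surjection $\mcG(F) \twoheadrightarrow G$. Since the kernel of this surjection intersects $\mcG(M)$ in exactly $\tors{\mcG(M)}$, we obtain $H \cong \mcG(M)/\tors{\mcG(M)}$, which is free abelian by~(a), and $S \subseteq H$ by construction. If $H$ were already pure in $G$ we would be done, but purity is not directly guaranteed, so I pass to the purification
\[
  \widetilde{H} := \bigl\{ g \in G : ng \in H \text{ for some integer } n \ge 1 \bigr\}.
\]
By definition $G/\widetilde{H}$ is torsion-free, so $\widetilde{H}$ is pure in $G$, and of course $S \subseteq H \subseteq \widetilde{H}$.

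It remains to show that $\widetilde{H}$ is free abelian. Let $N$ be a positive integer annihilating the torsion of $\mcG(F)/\mcG(M)$, as provided by~(b). A short diagram chase, using the natural surjection $\mcG(F)/\mcG(M) \twoheadrightarrow G/H \cong \mcG(F)/(\mcG(M) + \tors{\mcG(F)})$, shows that every torsion element of $G/H$ is likewise killed by $N$: if $n[x] = 0$ in $G/H$ then $nx = m + t$ for some $m \in \mcG(M)$ and $t \in \tors{\mcG(F)}$; clearing $t$ by its order shows that the class of $x$ in $\mcG(F)/\mcG(M)$ is torsion, hence killed by $N$, so $Nx \in \mcG(M)$ and $N[x]=0$ in $G/H$. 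Consequently $N\widetilde{H} \subseteq H$, and since $\widetilde{H}$ sits inside the torsion-free group $G$, multiplication by $N$ is an injective homomorphism $\widetilde{H} \hookrightarrow H$. Because $H$ is free abelian and every subgroup of a free abelian group is free abelian, $\widetilde{H}$ is free abelian. Thus $\widetilde{H}$ is a pure free abelian subgroup of $G$ containing $S$, and Pontryagin's Criterion forces $G$ itself to be free abelian.

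The only real subtlety is the bounded-exponent transfer from $\mcG(F)/\mcG(M)$ to $G/H$ used in the previous paragraph; everything else is a routine application of Pontryagin's Criterion together with the fact that subgroups of free abelian groups are free abelian. I expect no difficulty in the Galois refinement beyond taking Galois closures inside $F$.
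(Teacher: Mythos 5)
Your proof is correct and follows essentially the same route as the paper's: both verify condition (2) of Pontryagin's Criterion by producing a pure free abelian subgroup containing the lift of the given finite set, using the exponent bound from (b) to embed that subgroup into $\mcG(M)/\tors{\mcG(M)}$ via multiplication by $N$. The paper writes the pure subgroup directly as $\{\overline{P} : [m]\overline{P} \in \mcG(M)/\tors{\mcG(M)}\}$ (with $m$ the exponent from (b)), while you construct it as the purification of the image of $\mcG(M)$ and then prove $N\widetilde{H}\subseteq H$; these are in fact the same subgroup, and your version merely spells out the purity and freeness verifications that the paper leaves implicit.
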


\begin{proof}
The proof of the $\GG_m$ case is originally from May (see \cite[Lemma 1]{May72}). However, his proof applies also if $\mcG$ is an elliptic
curve (or even an abelian variety). For convenience we will give the proof in detail here.

We want to use the implication $(2)\Longrightarrow (1)$ in
Pontryagin's Criterion \ref{Pontryagin}. Let $S=\{\overline{P_1},\dots,\overline{P_r}\}$ be a finite subset of
$\mcG(F)/\tors{\mcG(F)}$ with each $P_i\in \mcG(F)$. The field $F_0(P_1,\dots,P_r)$ is a
finite extension of $F_0$, and thus contained in a finite extension
$M/F_0$ with $M\subseteq F$ satisfying (a) and (b). This construction yields $S\subseteq \mcG(M)/\tors{\mcG(M)}$. Let $m$ be the exponent of $\tors{\left( \mcG(F)/\mcG(M) \right)}$. Then 
\begin{equation}
H=\left\{\overline{P} \in \mcG(F)/\tors{\mcG(F)}:\,\, [m]\overline{P} \in \mcG(M)/\tors{\mcG(M)}\right\}
\end{equation}
is a pure subgroup of $\mcG(F)/\tors{\mcG(F)}$. Moreover $H$ is free abelian, since $\mcG(M)/\tors{\mcG(M)}$ is free abelian, by assumption (a). 
Now Pontryagin's Criterion yields the stated result as $S$ is
contained in $H$.

Let $F/F_0$ be Galois and $M/F_0$ a finite extension, with $M\subseteq F$. Then the Galois closure of $M$ over $F_0$ is still contained in $F$. Thus the second statement follows immediately from the first one.
\end{proof}

\subsection{Some Facts on Matrix Groups}
\label{sec:matgrpfacts}
We collect  some facts on 
matrix groups $G=\gl{\IZ/p^n\IZ}{2}$
where 
 $p$ will denote a prime number and $n\ge 1$ an integer.  For any finite group $H$, we denote its exponent by $\ex{H}$.  In what follows we repeatedly use the classical Jordan-H\"older Decomposition Theorem without mentioning it directly. 

 The group $G$
 lies in the short exact sequence
\begin{equation}
  1\rightarrow U^{(1)} \rightarrow G 
  \rightarrow
  \gl{\IF_p}{2}\rightarrow 1
\end{equation}
where
\begin{equation}
  U^{(1)} = 1 + p \mat{\IZ/p^{n}\IZ}{2}.
\end{equation}
So $U^{(1)}$ is a  $p$-group of order $p^{4(n-1)}$ or trivial. Therefore
\begin{equation}
\label{eq:orderG}
  \vert G\vert
 =  p^{4(n-1)} \vert \gl{\IF_p}{2}\vert = 
(p^2-1)(p^2-p)p^{4(n-1)}. 
\end{equation}
We  generalize $U^{(1)}$ by setting
\begin{equation}
  U^{(k)} = \left\{1 + p^k B: B\in \mat{\IZ/p^n\IZ}{2}\right\}
\end{equation}
for $1\le k\le n$. 
Each $U^{(k)}$ is a normal subgroup of $G$ of order
$p^{4(n-k)}$ and lies in the short exact sequence
\begin{equation}
\label{eq:Ukexact}
  1\rightarrow U^{(k)} \rightarrow G 
  \rightarrow
  \gl{\IZ/p^k\IZ}{2}\rightarrow 1.
\end{equation}

Over the prime field we find the exact sequences
\begin{equation}\label{eq:gl2sequence}
\begin{aligned}
  1&\rightarrow \sl{\IF_p}{2}\rightarrow\gl{\IF_p}{2}\rightarrow
  \IF_p^*\rightarrow 1,
\quad\text{and}\\
1&\rightarrow\{\pm 1\}\rightarrow
\sl{\IF_p}{2}\rightarrow\psl{\IF_p}{2}\rightarrow 1 .
\end{aligned}
\end{equation}

Recall that a $p$-group only has $\IZ/p\IZ$ as composition factors
and that $\psl{\IF_p}{2}$ is simple if $p\ge 5$.
We recall that $\psl{\IF_p}{2}$ are solvable if $p\le 3$.
So if $p\ge 5$, then $\psl{\IF_p}{2}$ is a  composition factor of
$G$ and all  other composition factors are abelian.

What about the exponent of $G$? The determinant is a  homomorphism
\begin{equation}
  G \rightarrow (\IZ/p^n\IZ)^*
\end{equation}
onto the group of units of $\IZ/p^n\IZ$. 
The unit group  is  cyclic of order $p^{n-1}(p-1)$
if $p\ge 3$. In this case, the exponent $\ex{G}$ is a
multiple of $p^{n-1}(p-1)$. 
If $p=2$ and $n\ge 2$, then $(\IZ/2^n\IZ)^*$ contains a cyclic subgroup of
order $2^{n-2}$, and $\ex{G}$ is a multiple of $p^{n-2}=2^{n-2}$.

\begin{lemma}
\label{lem:boundH}
If $H$ is a subgroup of $G$, then
$\vert H\vert <p^{4+8{\rm ord}_p\ex{H}}\le p^4 \ex{H}^8$.
\end{lemma}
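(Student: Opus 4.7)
\emph{Proof plan.} Set $e = \ex{H}$ and $k = {\rm ord}_p(e)$; since $p^k \le e$, the inequality $p^{4+8k} \le p^4 e^8$ is immediate, leaving the task of showing $|H| < p^{4+8k}$. If $k \ge n$ then $|H| \le |G| < p^{4n} \le p^{4+8k}$ by \eqref{eq:orderG}, so I may assume $k + 1 \le n$. The case $k = 0$ is equally quick: then $e$ is coprime to $p$, so every element of $H \cap U^{(1)}$ has order both coprime to $p$ and a power of $p$, forcing $H \cap U^{(1)} = \{1\}$; hence $|H| \le |\gl{\IF_p}{2}| = (p^2-1)(p^2-p) < p^4$. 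Assume from now on that $k \ge 1$.

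The strategy is to restrict the surjection in \eqref{eq:Ukexact} (with $k+1$ in place of $k$) to $H$, writing $|H| = |H \cap U^{(k+1)}| \cdot |\rho(H)|$ where $\rho(H) \subseteq \gl{\IZ/p^{k+1}\IZ}{2}$. By \eqref{eq:orderG} one has $|\rho(H)| \le (p^2-1)(p^2-p)p^{4k} < p^{4+4k}$, so it suffices to prove $|H \cap U^{(k+1)}| \le p^{4k}$.

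Let $A \in H \cap U^{(k+1)}$. Since $U^{(1)}$ is a $p$-group the order of $A$ is a power of $p$; dividing $e$, it is at most $p^k$. Write $A = 1 + p^j B$ with $B \not\equiv 0 \pmod p$, so $j \ge k + 1 \ge 2$. The binomial expansion
\[
A^p \;=\; 1 + p^{j+1} B + \binom{p}{2} p^{2j} B^2 + \cdots + p^{pj} B^p
\]
satisfies $A^p \in U^{(j+1)} \setminus U^{(j+2)}$: for $i \ge 3$ the $i$-th term lies in $U^{(j+2)}$ because $v_p\bigl(\binom{p}{i} p^{ij}\bigr) \ge j + 2$ once $j \ge 1$, and the $i = 2$ term lies in $U^{(j+2)}$ because either $p \mid \binom{p}{2}$ (for $p$ odd) or $2j \ge j + 2$ (for $p = 2$, using $j \ge 2$). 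Iterating gives ${\rm ord}(A) = p^{n-j}$; together with ${\rm ord}(A) \le p^k$ this forces $j \ge n - k$. So $A$ lies in $U^{(\max(k+1,\, n-k))}$, whence
\[
|H \cap U^{(k+1)}| \;\le\; p^{4 \min(n - k - 1,\, k)} \;\le\; p^{4k}.
\]
Multiplying the two bounds yields $|H| < p^{4+4k} \cdot p^{4k} = p^{4+8k}$, as required.

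The main obstacle is the $p = 2$ bookkeeping: when $p = 2$ and $j = 1$, the cross-term $4B^2$ has the same $p$-adic valuation as $4B$, so the identity $v_p(A^p - 1) = v_p(A - 1) + 1$ can fail. Reducing modulo $p^{k+1}$ instead of $p^k$ is exactly what forces $j \ge 2$ in the filtration and restores the clean valuation shift; this dictates the choice of reduction level and is why the stated bound has an $8k$ rather than a $4k$.
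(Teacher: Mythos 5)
Your proof is correct, and it takes a genuinely different route from the one in the paper. The paper first writes $\vert H\vert=p^em$ with $p\nmid m$ and bounds $m<p^3$ from divisibility, then restricts attention to a $p$-Sylow subgroup $H'$ with $\ex{H'}=p^f$ and decomposes $H'$ via the exact sequence \eqref{eq:Ukexact} at level $k=f$ (bumped to $k=2$ when $p^f=2$ to protect the prime $2$). The kernel bound there comes from a one-shot manipulation: expanding $(1+p^kB)^{p^f}=1$, factoring out $p^{k+f}B$ times a unit, and concluding $p^{k+f}B=0$, which limits $B$ to at most $p^{4f}$ choices. You instead work with all of $H$ at once, using \eqref{eq:Ukexact} at level $k+1$ (with $k=\mathrm{ord}_p\ex{H}$), bound the image trivially by $p^{4+4k}$, and for the kernel you compute the exact order of $1+p^jB$ via the valuation-shift identity $v_p(A^p-1)=v_p(A-1)+1$, valid once $j\ge 2$. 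The choice of level $k+1$ makes $j\ge k+1\ge 2$ automatic and so dispenses with the paper's explicit $p=2$ adjustment; the trade-off is that you replace the paper's clean separation of the $p$-part and prime-to-$p$ part of $\vert H\vert$ with a slightly lossier but structurally simpler single decomposition. Both arguments yield the bound $p^{8k+4}$ (the paper's $f$ equals your $k$), so the quantitative strength is the same. One small imprecision to note: the statement ``iterating gives $\mathrm{ord}(A)=p^{n-j}$'' should read $\mathrm{ord}(A)=p^{\max(0,\,n-j)}$ to cover $A=1$, but this does not affect the conclusion $j\ge n-k$.
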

\begin{proof}
  We factor $\vert H\vert =p^e m$, where $p\nmid m$ and $e\ge 0$.  
As $p^e m$ divides $\vert G\vert = p^{4(n-1)}(p^2-1)(p^2-p)$, we see 
that $m$  divides $(p^2-1)(p-1)$, and therefore 
\begin{equation}
m < p^3. 
\end{equation}

To bound $p^e$ we may assume $e\ge 1$. Let $H'$ be a
$p$-Sylow subgroup of $H$, so for some $f \ge 1$ we have $p^f = \ex{H'} \mid \ex{H}$. 

If $p^f = 2$ we set $k=2$ and otherwise we take $k=f$; note that $k\ge f$.
We claim that $U^{(k)}\cap H'$ contains at most $p^{4f}$ elements. 

Indeed, say
$1+p^k B$ lies in this intersection where
$B\in\mat{\IZ/p^n\IZ}{2}$. Then
expanding the right-hand side of $1=(1+p^kB)^{p^f}$ and subtracting $1$ yields
\begin{equation}
\label{eq:expand}
  0=
\sum_{l=1}^{p^f}
{p^f\choose l}p^{kl} B^{l}=
 p^{k+f}B\left(1 + \sum_{l=2}^{p^f}
{p^f\choose l}p^{kl-k-f} B^{l-1}\right);
\end{equation}
we observe $kl-k-f\ge 2k-k-f\ge 0$ in the sum.
If $l>2$, then $kl-k-f>0$, so
\begin{equation}
p\text{ divides } {p^f\choose l}p^{kl-k-f} 
\end{equation}
for $l \in \{3,\ldots,p^f\}$ and even if $l=2$. 
Therefore, the matrix in brackets on the right of \eqref{eq:expand} 
lies in $U^{(1)}\subseteq\gl{\IZ/p^n\IZ}{2}$, and we have
\begin{equation}
 p^{k+f}B=0.
\end{equation}
If $k\ge n$, then $1+p^kB$ was trivial to start out with. 
Otherwise,
we may represent the entries of $B$ with integers in $[0,p^{n-k})$. 
If
 $k+f\le n$ then these entries
 are divisible by $p^{n-k-f}$, so there are at most $p^{4f}$
possibilities for $B$. 
If finally $k < n < k+f$,
then the number of possible $B$ is at most $p^{4(n-k)}<p^{4f}$.
Our claim holds true.

According to the exact sequence \eqref{eq:Ukexact}
the quotient $H'/ (U^{(k)}\cap H')$ is isomorphic to a subgroup of
$\gl{\IZ/p^k\IZ}{2}$ and it is a $p$-group or trivial. 
We recall \eqref{eq:orderG} and the bound for $\vert U^{(k)}\cap H'\vert$ from
above to find 
\begin{equation}
 \vert H'\vert \le 
    \vert U^{(k)}\cap H' \vert \cdot p^{4(k-1)+1}
\le p^{4f+4(k-1)+1}
\le  p^{4(k+f)-3}.
\end{equation}
As $k\le f+1$ we conclude
$|H'| \le p^{8f+1} \le p \cdot \ex{H}^8$. 

We have $\vert H'\vert =p^e$.
Taking the product of the bounds for $m$ and $p^e$ yields the lemma.
\end{proof}

\begin{lemma}
\label{lem:noAn}~
\begin{enumerate}
\item Let $H$ be a subgroup of $G$. Then any non-abelian composition
factor of $H$ is isomorphic to $\psl{\IF_p}{2}$ for some prime $p\ge 5$.
\item If $n\ge 6$ is an integer and $p\ge 5$ is a prime, then
$A_n$ and $\psl{\IF_p}{2}$ are not isomorphic. 
\end{enumerate}
\end{lemma}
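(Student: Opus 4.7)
The plan is to handle the two parts by rather different methods: part (1) exploits the normal $p$-subgroup structure of $G$ together with Dickson's classification of subgroups of $\psl{\IF_p}{2}$, while part (2) reduces to a short order comparison.

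For part (1), I would start with the short exact sequence \eqref{eq:Ukexact} at $k=1$ and intersect with $H$. The intersection $H\cap U^{(1)}$ is a normal $p$-subgroup of $H$, so all of its composition factors are copies of $\IZ/p\IZ$ and in particular abelian. Every non-abelian composition factor of $H$ must therefore appear in the quotient $H' := H/(H\cap U^{(1)})$, which embeds in $\gl{\IF_p}{2}$. If $p\in\{2,3\}$, then $\gl{\IF_p}{2}$ is solvable and nothing is left to prove. Assume $p\ge 5$. Using $\det$ to pass to $H'\cap\sl{\IF_p}{2}$ (abelian quotient) and then quotienting by $\{\pm I\}$ (central of order $\le 2$) shows that it is enough to control non-abelian composition factors of an arbitrary subgroup of $\psl{\IF_p}{2}$. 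At this point I would invoke Dickson's classification (see, e.g., Suzuki's \emph{Group Theory}): every subgroup of $\psl{\IF_p}{2}$ with $p$ prime is cyclic, dihedral, isomorphic to one of $A_4,S_4,A_5$, or equal to $\psl{\IF_p}{2}$ itself (subfield subgroups $\psl{\IF_{p'}}{2}$ do not arise since $\IF_p$ has no proper subfields). Of these, only $A_5$ and $\psl{\IF_p}{2}$ contribute non-abelian composition factors, and both have the required form $\psl{\IF_q}{2}$ with $q\ge 5$ prime, since $A_5\cong\psl{\IF_5}{2}$.

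For part (2), I would proceed by contradiction: assume $A_n\cong\psl{\IF_p}{2}$ for some $n\ge 6$ and prime $p\ge 5$. Equating orders gives $n! = (p-1)p(p+1)$. Since $p$ is an odd prime dividing $|A_n| = n!/2$, it divides $n!$, and therefore $p\le n$. Hence
\begin{equation}
(p-1)p(p+1) \le (n-1)n(n+1) < n^3.
\end{equation}
On the other hand, $n! > n^3$ for all $n\ge 6$: the ratio $n!/n^3 = (n-1)!/n^2$ equals $10/3$ at $n=6$ and is strictly increasing for $n\ge 3$, because $n^3\ge (n+1)^2$ in that range. This contradicts $n! = (p-1)p(p+1) < n^3$.

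The hardest step is the appeal to Dickson's classification in part (1): composition factors of a subgroup need not occur among the composition factors of the ambient group, so one cannot simply read off the answer from the composition series of $\gl{\IZ/p^n\IZ}{2}$ itself. Part (2), by contrast, is almost forced once one observes that $p$ must divide $n!$.
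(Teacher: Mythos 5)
Your proof of part (1) follows essentially the same route as the paper: reduce modulo the normal $p$-group $U^{(1)}$, pass through the determinant and the center of $\sl{\IF_p}{2}$, and invoke Dickson's classification of subgroups of $\psl{\IF_p}{2}$. One small inaccuracy: your stated form of Dickson's theorem omits the Borel-type subgroups, i.e.\ the nonabelian Frobenius groups $C_p\rtimes C_d$ with $d\mid (p-1)/2$ (for $p=7$, for instance, the Borel subgroup has order $21$ and is neither cyclic, dihedral, nor one of $A_4,S_4,A_5$). Since these groups are solvable, your conclusion that $A_5$ and $\psl{\IF_p}{2}$ are the only possible sources of non-abelian composition factors is unaffected, but the classification should be stated correctly — the paper phrases it as: every proper subgroup is either solvable or isomorphic to $A_5$. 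For part (2), you give a short self-contained order comparison ($p\mid n!$ forces $p\le n$, hence $(p-1)p(p+1)<n^3<n!$ for $n\ge 6$); the paper instead cites Artin, whose argument is likewise a cardinality comparison. Your version has the mild advantage of being fully explicit and not requiring an external reference; both are correct.
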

\begin{proof}
For the first claim let $C$ be a non-abelian composition factor of
$H$. 
The kernel of $\gl{\IZ/p^n\IZ}{2}\rightarrow \gl{\IF_p}{2}$ is
trivial or a $p$-group. 
As $C$ is not abelian it is also
 a composition factor of the image of $H$ in $\gl{\IF_p}{2}$.
The two exact sequences \eqref{eq:gl2sequence}
indicate that $C$ is a composition factor of a subgroup of $\psl{\IF_p}{2}$.
So $p\ge 5$ because $\psl{\IF_2}{2}$ and $\psl{\IF_3}{2}$ are both
solvable. 
Of course, $C$ could be isomorphic to the full group $\psl{\IF_p}{2}$,
which is simple. This  case is covered in the first part of the lemma. 
Otherwise, 
 $C$ is a quotient of a proper subgroup of $\psl{\IF_p}{2}$.
Dickson classified all possible subgroups of $\psl{\IF_p}{2}$, see \cite[Hauptsatz II.8.27]{Huppert67}.
According to this classification a proper subgroup is either solvable or isomorphic to
$A_5$. 
As $C$ is simple and non-abelian it must be isomorphic to  
$A_5\cong\psl{\IF_5}{2}$.
We have established the first claim in the lemma. 

The second claim follows since 
$\psl{\IF_p}{2}$ is not isomorphic to an alternating group if $p\ge 7$; Artin
\cite{Artin55}
gives a simple proof by comparing cardinalities. 
\end{proof}

We conclude with the following observation about the commutator subgroup $[H,H]$ of an arbitrary subgroup $H$ of $G$.
\begin{lemma}
\label{lem:commutator}
  Let $H$ be a subgroup of $G$, and set $d=[G:H]!$. Then 
$\vert [H,H]\vert \ge p^{n/3}/d$.
\end{lemma}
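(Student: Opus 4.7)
The plan is to produce a large-order element of $[H, H]$ by combining a normal-core argument with the classical unipotent generators of $\SL_2(\mathbb{Z})$. The action of $G$ on the cosets $G/H$ gives a homomorphism $G \to S_{[G:H]}$ whose kernel $N = \bigcap_{g \in G} gHg^{-1}$ satisfies $[G:N] \mid [G:H]! = d$, so $g^d \in N \subseteq H$ for every $g \in G$. In particular, taking $a = \begin{pmatrix} 1 & 1 \\ 0 & 1 \end{pmatrix}$ and $b = \begin{pmatrix} 1 & 0 \\ 1 & 1 \end{pmatrix}$, we have $a^d, b^d \in H$, and a direct matrix calculation gives
\begin{equation}
c := [a^d, b^d] = \begin{pmatrix} 1 + d^2 + d^4 & -d^3 \\ d^3 & 1 - d^2 \end{pmatrix} \in [H, H].
\end{equation}

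Set $v = v_p(d)$ and write $d = p^v u$ with $\gcd(u, p) = 1$. If $2v \geq n$, then $d \geq p^v \geq p^{n/2}$, so $p^{n/3}/d \leq p^{-n/6} \leq 1 \leq |[H, H]|$ and the desired inequality is vacuous. Otherwise $v \geq 1$ and $2v < n$; inspecting the entries of $c$ modulo $p^{2v+1}$ (and using $3v \geq 2v + 1$) yields
\begin{equation}
c \equiv 1 + p^{2v}\begin{pmatrix} u^2 & 0 \\ 0 & -u^2 \end{pmatrix} \pmod{U^{(2v+1)}},
\end{equation}
so $c \in U^{(2v)} \setminus U^{(2v+1)}$ (the displayed diagonal matrix is nonzero in $M_2(\mathbb{F}_p)$ in all characteristics: for odd $p$ the entries $u^2$ and $-u^2$ are distinct, while for $p = 2$ both equal $u^2 \equiv 1 \pmod 2$). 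A standard binomial expansion then yields $(1 + p^k X)^p \in U^{(k+1)} \setminus U^{(k+2)}$ whenever $X \not\equiv 0 \pmod p$ and either $p$ is odd with $k \geq 1$, or $p = 2$ with $k \geq 2$: the cross terms $\binom{p}{i}(p^k X)^i$ for $2 \leq i \leq p$ all have $p$-adic valuation at least $k + 2$ in these ranges. Iterating with $k = 2v \geq 2$ gives $\mathrm{ord}(c) = p^{n - 2v}$ and hence $|[H, H]| \geq p^{n - 2v}$. Since $d \geq p^v$, the target inequality $p^{n-2v} \geq p^{n/3}/d$ reduces to $v \leq 2n/3$, which is automatic from $2v < n$.

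What remains is the boundary case $v = 0$, where $c$ need not lie in $U^{(1)}$ and the order calculation breaks down. But $v = 0$ forces $\gcd(d, p^n) = 1$, so $a^d$ and $b^d$ generate the full cyclic subgroups $\langle a \rangle$ and $\langle b \rangle$, placing $a, b$ themselves in $H$. Since $\{a, b\}$ generates $\SL_2(\mathbb{Z})$ (via $b a^{-1} b = \begin{pmatrix} 0 & -1 \\ 1 & 0 \end{pmatrix}$, together with the classical presentation of $\SL_2(\mathbb{Z})$), and $\SL_2(\mathbb{Z})$ surjects onto $\SL_2(\mathbb{Z}/p^n\mathbb{Z})$, we conclude $H \supseteq \SL_2(\mathbb{Z}/p^n\mathbb{Z})$. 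Examining $[a, b] = \begin{pmatrix} 3 & -1 \\ 1 & 0 \end{pmatrix}$: its image in $\GL_2(\mathbb{F}_p)$ has some finite order, and Hensel lifting multiplies this order by $p^{n-1}$ when passing to $\GL_2(\mathbb{Z}/p^n\mathbb{Z})$. This yields a cyclic subgroup of $[H, H]$ of order at least $p^{n-1} \geq p^{n/3}/d$ for $n \geq 2$; the case $n = 1$ is trivial. I expect the main difficulty will be executing the $p$-adic order estimate uniformly, especially the interplay with the $p = 2$ special cases.
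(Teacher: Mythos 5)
Your approach is a genuinely different route from the paper's. The paper does not compute the \emph{order} of a single commutator; instead it takes a whole family $A_x=\begin{pmatrix}1&dx\\0&1\end{pmatrix}$, $B_x=\begin{pmatrix}1&0\\dx&1\end{pmatrix}$ and observes that for $x$ ranging over integers in $[0,p^{n/3}d^{-1})$ the entry $(dx)^3$ in the commutator $A_xB_xA_x^{-1}B_x^{-1}$ is an integer $<p^n$, so the commutators are pairwise distinct modulo $p^n$. That argument needs no case split on $v_p(d)$, no Hensel-type lifting, and no appeal to the presentation of $\SL_2(\IZ)$. By contrast your argument treats a single element, and in the regime $v=v_p(d)\ge 1$, $2v<n$ it actually yields the stronger bound $\lvert[H,H]\rvert\ge p^{n-2v}$; that part is correct, including the $p=2$ threshold $k\ge 2$ and the reduction of $p^{n-2v}\ge p^{n/3}/d$ to $v\le 2n/3$.

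However, the $v=0$ case has a genuine gap. Once you know $a,b\in H$ and hence $H\supseteq\SL_2(\IZ/p^n\IZ)$, you assert that the order of $[a,b]=\begin{pmatrix}3&-1\\1&0\end{pmatrix}$ is multiplied by exactly $p^{n-1}$ when passing from $\GL_2(\IF_p)$ to $\GL_2(\IZ/p^n\IZ)$. This is not justified: if $m$ is the order of the reduction and $w=v_p\bigl([a,b]^m-I\bigr)$, then the order modulo $p^n$ is $m\,p^{\max(n-w,0)}$, and while $w\ge 1$ always holds, $w=1$ is a Wieferich-type condition that one has no control over for general $p$ (you yourself flag this as the ``main difficulty''). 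Thus the claimed lower bound $p^{n-1}$ is unproven, and for small $n$ relative to $w$ the inequality $\lvert[H,H]\rvert\ge p^{n/3}/d$ is not established by this route. The cleanest repair in the $v=0$ case would be to abandon the single-element order computation and instead argue as the paper does: with $a,b\in H$, the commutators $\left[a^{x},b^{x}\right]$ for integers $x\in[0,p^{n/3})$ have pairwise distinct $(2,1)$-entries $x^3<p^n$, giving at least $p^{n/3}\ge p^{n/3}/d$ elements of $[H,H]$ at once. (Alternatively, one could invoke perfectness of $\SL_2(\IZ/p^n\IZ)$ for $p\ge 5$ and handle $p\in\{2,3\}$ by hand, but that is heavier machinery than the lemma requires.)
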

\begin{proof}
Since $d=[G:H]!$, the $d$-th power of any element in $G$ lies in $H$. We define
\begin{equation}
  A=\left(
  \begin{array}{cc}
    1 & dx \\ 0 & 1
  \end{array}\right)\quad\text{and}\quad
  B=\left(
  \begin{array}{cc}
    1 & 0 \\ dx & 1
  \end{array}\right)
\end{equation}
for $x\in\IZ/p^n\IZ$.
Then $A,B\in H$ and 
\begin{equation}
  ABA^{-1}B^{-1}=
\left(
  \begin{array}{cc}
    1 +(dx)^2+(dx)^4& -(dx)^3 \\ (dx)^3 & 1-(dx)^2
  \end{array}\right) \in [H,H].
\end{equation}
If $x$ runs over the elements represented by integers in
$[0,p^{n/3}d^{-1})$
then $ABA^{-1}B^{-1}$ runs over a set of distinct elements in $[H,H]$, and the
lemma follows.
\end{proof}

\subsection{Applications}

If $F$ is a subfield of $\IQbar$, then $\sa{F}$ denotes the field obtained by adjoining to $F$ all roots of
irreducible polynomials in $F[X]$ with symmetric or alternating Galois
groups, of any degree.

\begin{proposition}\label{freethings}
Let $\mcG$ be $\IG_m$ or an elliptic curve. We suppose that $\mcG$ is
 defined over a number field $K$.
 Let $d$ be a positive integer, and let $F = \left(\left(K^{ab}\right)^{sa}\right)^{(d)}$.  Then $\mcG(F)/\tors{\mcG(F)}$ is free abelian 
\begin{enumerate}
\item if  $\mcG$ is $\IG_m$,
\item if $\mcG$ is an elliptic curve with CM over $K$, and
\item if
 $\mcG$ is an elliptic curve without CM.  In this case
  $\tors{\mcG(F)}$ is finite.
\end{enumerate}
\end{proposition}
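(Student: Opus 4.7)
The strategy is to apply the two freeness criteria established in the preceding subsection. For part (3), it suffices to show that $\tors{\mcG(F)}$ is finite, since Criterion A (Proposition \ref{fintors}) then yields the result---note that $F/K$ is Galois because each of $(\cdot)^{ab}$, $(\cdot)^{sa}$, $(\cdot)^{(d)}$ preserves the Galois condition over $K$. For parts (1) and (2) we apply Criterion B (Proposition \ref{finreltors}) with $F_0 = K^{ab}$. A structural observation used throughout is that every non-abelian composition factor of $\Gal(F/K)$ has the form $A_n$ for some $n\ge 5$: the abelian operation contributes only abelian factors, while the sa- and $(d)$-operations contribute composition factors that are subquotients of symmetric groups, hence alternating or abelian.

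For part (3), fix a prime $p$. By Serre's open image theorem \cite{Serre72} the $p$-adic Galois representation on the Tate module of $\mcG$ has open image in $\gl{\IZ_p}{2}$, so for all but finitely many primes $p$ the mod-$p$ representation is surjective, and by the exact sequences \eqref{eq:gl2sequence} the group $\Gal(K(\mcG[p^n])/K)$ has $\psl{\IF_p}{2}$ as a composition factor for every $n$. For $p\ge 7$, Lemma \ref{lem:noAn}(2) rules out $\psl{\IF_p}{2}$ as a composition factor of any quotient of $\Gal(F/K)$, so $K(\mcG[p^n])\not\subseteq F$; combined with the fact that under surjectivity of the $p^n$-representation, an $F$-rational point of order exactly $p^n$ generates the full $K(\mcG[p^n])$ over $K$, this gives finiteness of the $p$-primary part of $\tors{\mcG(F)}$. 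The finitely many remaining primes, notably $p\in\{2,3,5\}$ where $\psl{\IF_p}{2}$ is solvable or isomorphic to $A_5$, form the main obstacle and require the quantitative subgroup estimates of Lemmas \ref{lem:boundH} and \ref{lem:commutator}: applied to the image of the $p$-adic representation---a subgroup of $\gl{\IZ/p^n\IZ}{2}$ of index bounded independently of $n$---these bounds are used to show that for $n$ sufficiently large the group $\Gal(K(\mcG[p^n])/K)$ cannot be realised as a quotient of $\Gal(F/K)$.

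For parts (1) and (2), the base case of Criterion B---freeness of $\mcG(K^{ab})/\tors{\mcG(K^{ab})}$---is Iwasawa's theorem in case (1), and in case (2) follows from its elliptic analogue together with the fact that the torsion of a CM elliptic curve defined over $K$ is rational over $K^{ab}$ (since the image of $\Gal(\QQbar/K)$ on the Tate module commutes with the CM action and is therefore abelian). Given a finite Galois subextension $M/K^{ab}$ of $F/K^{ab}$ containing a prescribed finite piece, condition (a) (freeness of $\mcG(M)/\tors{\mcG(M)}$) is established by iterating the Iwasawa--Horie--May-type argument along a tower realising $M$ as a sequence of sa- and $(d)$-steps over $K^{ab}$, using at each step the controlled composition-factor structure noted above. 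Condition (b) (bounded exponent of the torsion of $\mcG(F)/\mcG(M)$) follows from Kummer theory in the multiplicative case and from its elliptic analogue in the CM case, with the key input being that the radical extensions appearing inside $F$ over $M$ have degree bounded in terms of $d$ and the parameters of the sa-operation.
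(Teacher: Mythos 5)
Your overall framing (reduce to Criteria A and B, with Serre and the quantitative lemmas doing the heavy lifting in part (3)) is in the right spirit, but the proposal has several genuine gaps and differs from the paper at the key step. First, the structural claim that ``every non-abelian composition factor of $\Gal(F/K)$ has the form $A_n$'' is false for the $(d)$-step: normal closures of degree-$\le d$ extensions have Galois groups that are arbitrary subgroups of $S_d$, whose composition factors can be any simple group of order dividing $d!$ (e.g.\ $\PSL_2(\IF_7)$ for $d\ge 7$), not only alternating or abelian. This invalidates your appeal to Lemma \ref{lem:noAn}(2) in the $p\ge 7$ case of part (3). The paper sidesteps this entirely by never trying to control composition factors of the $(d)$-step: instead it embeds $F$ into the top of the alternating tower of Lemma \ref{lem:finitetornonCM} with exponent bound $e=60d!$ (the $(d)$-step is absorbed into a bounded-exponent layer via $d!\mid e$, the sa-step into an admissible layer using Lemma \ref{lem:noAn}(2)), and then the whole of Proposition \ref{freethings} is a short reduction to that lemma. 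In particular, the bound on $p$ in part (3) comes from $\ex{\PSL_2(\IF_p)}\le e^k$, not from a composition-factor dichotomy.

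For parts (1) and (2), your treatment of both conditions of Criterion B diverges from the paper and has gaps. Condition (a) is not a ``base case'' to be iterated from Iwasawa's theorem: $M$ is an arbitrary finite Galois extension $K^{ab}(\alpha)$ of $K^{ab}$ inside $F$, and the paper handles it in one stroke by observing $M\subseteq K(\alpha)^{ab}$ and invoking the Bogomolov property for abelian extensions of number fields (Amoroso--Zannier for $\GG_m$, Baker--Silverman for abelian varieties), after which Proposition \ref{bfree} gives freeness; it is unclear how an ``iterated Iwasawa--Horie--May'' argument would establish (a) for such $M$, and a direct iteration runs into exactly the non-preservation-under-finite-extension issues the paper highlights. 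Condition (b) is also problematic: the degree of the Kummer extension $M(P)/M$ is governed by $m$, the very quantity you must bound, so appealing to Kummer theory alone is circular. The paper instead uses Lemma \ref{will} to extract an element of order $m$ in $\Gal(M(P)/M)$ and then runs the same tower-contraction argument (abelian extensions trivialize across admissible layers, contribute at most exponent $e$ across the odd layers), yielding $m\mid e^k$ uniformly. These two points---the reliance on a false composition-factor claim and the absence of the tower/exponent mechanism---are the main things that would need to be repaired.
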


For a more general version of (3) regarding elliptic curves without 
CM we refer to part (1) of Lemma \ref{lem:finitetornonCM} below.

\begin{lemma}\label{will}
Let $\mcG$ be $\IG_m$ or an elliptic curve. We suppose that $\mcG$ is
 defined over a number field $K$.
Let $m$ be a positive integer  and $F/K$  an algebraic extension such
that all $m$-torsion points of $\mcG$ are defined over $F$. If $P$ is
an algebraic point of $\mcG$ whose image in $\mcG(\QQbar)/\mcG(F)$ has
order $m$, then $F(P)/F$ is a finite abelian Galois extension, and $\Gal(F(P)/F)$ contains an element of order $m$.
\end{lemma}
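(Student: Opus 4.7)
The plan is a standard Kummer-theoretic construction, for which I write the group law on $\mcG$ additively in both cases. The hypothesis that $P$ has image of order $m$ in $\mcG(\QQbar)/\mcG(F)$ means exactly that $[m]P\in\mcG(F)$ while $[d]P\notin\mcG(F)$ for $1\le d<m$. For any $\sigma\in\Gal(\QQbar/F)$, applying $\sigma$ to $[m]P\in\mcG(F)$ yields $[m](\sigma P - P)=0$, so $\sigma P - P$ is an $m$-torsion point of $\mcG$; the hypothesis $\mcG[m]\subseteq\mcG(F)$ then ensures $\sigma P - P\in\mcG(F)$.

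Next I would introduce
\begin{equation}
\chi\colon \Gal(\QQbar/F)\longrightarrow \mcG[m],\qquad \chi(\sigma)=\sigma P - P,
\end{equation}
and verify that it is a group homomorphism. For $\sigma,\tau\in\Gal(\QQbar/F)$ one has
\begin{equation}
\chi(\sigma\tau)=\sigma\tau P - P=\sigma(\tau P - P)+(\sigma P - P),
\end{equation}
and since $\tau P - P\in\mcG(F)$ is fixed by $\sigma$, this equals $\chi(\tau)+\chi(\sigma)$. The kernel of $\chi$ consists of the $\sigma$ fixing $P$, so it equals $\Gal(\QQbar/F(P))$, which is therefore normal in $\Gal(\QQbar/F)$. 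By Galois theory $F(P)/F$ is Galois, and $\Gal(F(P)/F)$ embeds into the finite abelian group $\mcG[m]$; in particular it is finite and abelian.

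To produce an element of order $m$, let $e$ denote the exponent of the image of $\chi$. Then $[e](\sigma P - P)=0$ for every $\sigma$, which rearranges to $\sigma([e]P)=[e]P$, so $[e]P\in\mcG(F)$. Since the order of $P$ modulo $\mcG(F)$ is exactly $m$, this forces $m\mid e$; the opposite divisibility $e\mid m$ is automatic from the image being a subgroup of $\mcG[m]$. Hence $e=m$, and any finite abelian group of exponent $m$ contains an element of that order by the invariant factor decomposition. The only delicate point in the whole argument is checking that $\chi$ is a genuine homomorphism rather than merely a crossed homomorphism, and this is precisely where the hypothesis that all $m$-torsion of $\mcG$ is $F$-rational enters nontrivially; everything else is routine.
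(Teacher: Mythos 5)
Your proof is correct and follows essentially the same Kummer-theoretic route as the paper: define the cocycle $\sigma \mapsto \sigma P - P$, use $\mcG[m]\subseteq\mcG(F)$ to promote it to an injective homomorphism into $\mcG[m]$ (hence $F(P)/F$ finite abelian Galois), and then argue via the invariant-factor/elementary-divisor decomposition that the image has exponent exactly $m$, producing an element of order $m$. The only cosmetic difference is that the paper first deduces normality of $F(P)/F$ directly from the observation that conjugates of $P$ differ by $F$-rational $m$-torsion and then works with $\Gal(F(P)/F)$, whereas you start from $\Gal(\QQbar/F)$ and identify the kernel; this is equivalent.
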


\begin{proof}
Let $[m]$ denote the multiplication-by-$m$ endomorphism of $\mcG$. 
If $[m]P = Q \in \mcG(F)$, then the conjugates of $P$ over $F$ are
other solutions of this equation, and they differ from $P$ by
$m$-torsion points. In particular, since all of these torsion points
are defined over $F$, we know that $F(P)/F$ is Galois.
We have a homomorphism $\Phi: \Gal(F(P)/F) \to \mcG[m]$ given by
$\sigma \mapsto \sigma(P) - P$ with target the points $\mcG[m]$ in
$\mcG(\IQbar)$ of order dividing $m$.  The
image of $\Phi$ is not contained in $\mcG[n]$ for any $n < m$. If it
were, then   $[n]P$ would be fixed by each element of $\Gal(F(P)/F)$,
meaning $[n]P \in \mcG(F)$, which is contrary to our assumption that
$P$ has order $m$ in $\mcG(\QQbar)/\mcG(F)$.
By the Elementary Divisor Theorem the image of $\Phi$, which is a finite abelian group,  contains an
element of order $m$. The lemma follows as $\Phi$ is injective. 
\end{proof}

We call a finite group \emph{admissible} if it has no composition factor which is abelian or isomorphic to $\psl{\IF_p}{2}$ for any $p\ge 5$. We call a field extension admissible if it is Galois and its Galois group is admissible. 
Note that the compositum of two admissible extensions of a common base field is again admissible over the base field. A Galois subextension of an admissible extension is also admissible. 

The exponent of a finite Galois extension is the exponent of its Galois group. The compositum of two finite Galois extensions of exponent dividing some $e\in\IN$ is again a finite Galois extension with exponent dividing $e$.

\begin{lemma}
\label{lem:finitetornonCM}
Let $\mcG$ be $\IG_m$ or an elliptic curve. Assume that $\mcG$ is
 defined over a number field $K$, and let $k,e\in\IN$.
Suppose
\begin{equation}
\ab{K}= F_0 \subseteq F_1 \subseteq F_2 \subseteq \cdots\subseteq
F_{2k-1}\subseteq F_{2k} \subseteq \IQbar
\end{equation}
is a tower of field extensions with the following property:
If $1\le i\le 2k$ then 
 $F_{i}$ is generated 
by all  admissible extensions of $F_{i-1}$ in $\IQbar$ for even $i$, and
$F_{i}$ is generated by all finite Galois 
extensions of $F_{i-1}$ in $\IQbar$ 
with exponent dividing $e$ for odd $i$. Then the following properties hold.
\begin{enumerate}
\item  If $\mcG=E$ is an elliptic curve without CM, then $\tors{E(F_{2k})}$ is finite and 
$E(F_{2k})/\tors{E(F_{2k})}$ is free abelian. 
\item If $M$ is a finite Galois subextension of $F_{2k}/F_0$ and if $\tors{\cG}\subseteq \cG(F_0)$, then the exponent of 
$\tors{\left(\cG(F_{2k})/\cG(M)\right)}$  divides $e^k$ and $\mcG(F_{2k})/\tors{\mcG(F_{2k})}$ is free abelian.
\end{enumerate}
\end{lemma}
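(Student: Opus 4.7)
The plan: prove part~(1) via Criterion~A after establishing finite torsion by contradiction; prove part~(2) via Lemma~\ref{will} combined with Criterion~B, exploiting that admissible extensions over a Galois base contribute no abelian subextensions. The shared engine: for a finite Galois $L/F_0$ inside $F_{2k}$ with $L_i:=L\cap F_i$, the compositum isomorphism realises $\Gal(L_i/L_{i-1})$ as a quotient of $\Gal(F_i/F_{i-1})$, which has exponent dividing $e$ for odd $i$ and is admissible (every finite quotient is admissible) for even $i$. Two crucial properties: admissible groups are perfect (any abelian quotient of a group with no abelian composition factor must be trivial), and normal subgroups of admissible groups are admissible (composition factors pass to normal subgroups).

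For part~(1), suppose $\tors{E(F_{2k})}$ is infinite. Then some prime $\ell\ge 5$ admits unbounded $\ell$-power torsion in $E(F_{2k})$. For $\ell$ outside Serre's finite exceptional set, the action of $\Gal(\QQbar/F_0)$ on $E[\ell^n]$ has $\sl{\IZ/\ell^n\IZ}{2}$-image up to bounded index, so the only stable submodules are the $E[\ell^j]$, giving $E(F_{2k})\cap E[\ell^n]=E[\ell^j]$ with $j$ unbounded. Apply the engine to $L:=F_0(E[\ell^j])$ and $G:=\Gal(L/F_0)\subseteq\gl{\IZ/\ell^j\IZ}{2}$: Lemma~\ref{lem:noAn}(1) restricts the non-abelian composition factors of $G$ to $\{\psl{\IF_5}{2},\psl{\IF_\ell}{2}\}$, both forbidden by admissibility. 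Hence the even-step quotients of $G$ are trivial, so $\ex{G}\mid e^k$, and Lemma~\ref{lem:boundH} yields $|G|<\ell^4 e^{8k}$, contradicting Serre's lower bound $|G|\gg \ell^{3j}$ for $j$ large. Moreover, any $\ell$-torsion point in $E(F_{2k})$ for $\ell$ outside the exceptional set forces $\psl{\IF_\ell}{2}$ to be a composition factor of $G$, hence $\ell\mid\ex{G}\mid e^k$ and so $\ell\mid e$, leaving only finitely many contributing primes; exceptional primes are handled by substituting Lemma~\ref{lem:commutator} for Serre's bound. Once $\tors{E(F_{2k})}$ is finite and $F_{2k}/K$ is Galois (the defining collections are $\Gal(\QQbar/K)$-stable), Criterion~A concludes.

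For part~(2), given $P\in\cG(F_{2k})$ with image of order $m$ in $\cG(F_{2k})/\cG(M)$, set $M_i:=MF_i$, $c_i:=\min\{n\ge 1:[n]P\in\cG(M_i)\}$, and $r_i:=c_{i-1}/c_i$, so $m=\prod_{i=1}^{2k} r_i$. Lemma~\ref{will} applied to $[c_i]P$ over $M_{i-1}$---valid because $\tors\cG\subseteq\cG(F_0)\subseteq\cG(M_{i-1})$---produces an abelian finite Galois subextension $N:=M_{i-1}([c_i]P)/M_{i-1}$ of $M_i/M_{i-1}$ with an element of order $r_i$ in its Galois group. For odd $i$, $\Gal(M_i/M_{i-1})$ is a closed subgroup of the exponent-$e$ profinite group $\Gal(F_i/F_{i-1})$, hence of exponent dividing $e$, so $r_i\mid e$. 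For even $i$, the compositum isomorphism realises $N/M_{i-1}$ as an abelian Galois subextension $F'''/F''$ of $F_i/F''$ where $F'':=F_i\cap M_{i-1}$, and since $M/F_0$ is Galois so is $F''/F_{i-1}$; taking the Galois closure $\tilde F'''$ of $F'''$ over $F_{i-1}$ inside $F_i$, the group $\Gal(\tilde F'''/F_{i-1})$ is admissible and $\Gal(\tilde F'''/F'')\trianglelefteq\Gal(\tilde F'''/F_{i-1})$ is admissible (normal subgroup of admissible) and therefore perfect, so its abelian quotient $\Gal(F'''/F'')$ is trivial, forcing $r_i=1$. Hence $m\mid e^k$. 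For freeness, apply Criterion~B with $F=F_{2k}$ and base $F_0$: condition~(b) is the exponent bound just proven, and for (a), $\cG(F_0)/\tors{\cG(F_0)}$ is free abelian by Proposition~\ref{bfree} applied to the Bogomolov property of $K^{ab}$, while $\cG(M)/\cG(F_0)$ has exponent dividing $[M:F_0]$ (by Lemma~\ref{will} again), so the torsion-free $\cG(M)/\tors{\cG(M)}$ embeds into the free abelian group $\tfrac{1}{[M:F_0]}\cG(F_0)/\tors{\cG(F_0)}$ and is therefore free.

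The main obstacle is the part~(1) contradiction---producing a large Galois extension $L$ from the mere hypothesis of infinite torsion (via Serre's analysis of $\Gal$-stable submodules of $E[\ell^n]$) and matching the filtration upper bound $|G|<\ell^4 e^{8k}$ against the Serre/Lemma~\ref{lem:commutator} lower bound uniformly over primes $\ell$, including those in the finite Serre-exceptional set.
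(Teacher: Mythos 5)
Your ``engine'' (intersecting a finite Galois $L/F_0$ inside $F_{2k}$ with the tower and using the compositum isomorphism so that odd steps have exponent dividing $e$ and even steps are admissible, hence trivial once one sees that admissible groups are perfect and that normality preserves admissibility) is essentially the paper's contraction argument, and your derivation $m\mid e^k$ in part~(2) via the $c_i$'s and Lemma~\ref{will} is a correct reorganisation of it. However, there are two genuine gaps.

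First, your argument for condition~(a) of Criterion~B is wrong. You claim that $\cG(M)/\cG(F_0)$ has exponent dividing $[M:F_0]$ ``by Lemma~\ref{will} again'' and then scale by $[M:F_0]$ to embed $\cG(M)/\tors{\cG(M)}$ into $\cG(F_0)/\tors{\cG(F_0)}$. Lemma~\ref{will} only applies to a point whose image in $\cG(\IQbar)/\cG(F_0)$ has \emph{finite} order; it gives no information otherwise, and in general $\cG(M)/\cG(F_0)$ is \emph{not} a torsion group. For instance with $\cG=E/\QQ$, $F_0=\QQ^{\rm ab}$ and $M=F_0(\sqrt{\alpha})$, the quadratic twist $E^{\alpha}(\QQ^{\rm ab})$ has infinite rank (Frey--Jarden), producing points $P\in E(M)$ with $[j]P\notin E(F_0)$ for every $j\ge1$. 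Thus the claimed embedding does not exist. The paper instead proves~(a) by observing that $M=K^{\rm ab}(\alpha)$ is contained in the abelian extension $K(\alpha)^{\rm ab}$ of the number field $K(\alpha)$, which has the Bogomolov property by Amoroso--Zannier (for $\IG_m$) resp.\ Baker--Silverman (for abelian varieties), whence Proposition~\ref{bfree} gives freeness of $\cG(M)/\tors{\cG(M)}$. You cannot avoid invoking the Bogomolov property of $M$ here.

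Second, in part~(1) your framework passes to $L=F_0(E[\ell^j])$ and relies on the assertion that the only $\Gal(\IQbar/F_0)$-stable submodules of $E[\ell^n]$ are the $E[\ell^j]$, so that $E(F_{2k})\cap E[\ell^\infty]=E[\ell^j]$. This is fine for non-exceptional $\ell$, where the image of $\Gal(\IQbar/F_0)$ is the full $\sl{\IZ/\ell^n\IZ}{2}$, but for the finitely many Serre-exceptional primes the image is only of bounded index, the stable submodule structure may be more complicated, and the sentence ``exceptional primes are handled by substituting Lemma~\ref{lem:commutator} for Serre's bound'' does not engage with this issue at all. The paper sidesteps it entirely by working with the normal closure $K_{2k}$ of $K(P)/K$ for an arbitrary torsion point $P$ (no balancedness of torsion is needed), bounding $p$ by the exponent estimate against $\psl{\IF_p}{2}$, and then bounding $n$ via the preimage $H'$ of $\Gal(K_{2k}/K_0)$ in $H=\Gal(K(E[p^n])/K)$, combining the uniform bound $|\ker(H\to\Gamma)|\ll 1$ coming from Serre's open-image theorem with Lemma~\ref{lem:boundH} and Lemma~\ref{lem:commutator}. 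Your sketch would need to reproduce the control of that kernel, and it does not.
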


\begin{proof}
All extensions $F_{i}/F_{i-1}$ are Galois
 and $F_0/K$ is Galois too. 
We show by induction on $i$ that $F_i/K$ is Galois. 
If $i\ge 1$ and if $\sigma:F_i\rightarrow\IQbar$
is the identity on $K$ then $\sigma(F_{i-1})=F_{i-1}$ and
$\sigma(F_i)/F_{i-1}$ is generated by Galois extensions with certain
group-theoretic properties. Hence $\sigma(F_i)=F_i$ and thus $F_i/K$
is Galois.

The second claim in part (1) follows from the first one and Criterion A, Proposition \ref{fintors}. 
We now prove the first claim in (1). 

Let $P \in E(F_{2k})$ be a point of finite order. 
It suffices to show that the said order is bounded
independently of $P$. Without loss of generality we may assume that
the order is $p^n$ for some prime $p$ and some integer $n\ge 1$. 

We construct inductively an auxiliary tower of number fields by first
taking $K_{2k}$ to be the normal closure of $K(P)/K$ in $\IQbar$, so
$K_{2k}\subseteq F_{2k}$. 
As $K(E[p^n])/K$ is normal we have $K_{2k}\subseteq K(E[p^n])$. 
For any $1\le i\le 2k$ we construct $K_{i-1}$ using the diagram

\begin{equation}\label{eq:diagram}
\begin{xy}
(-12,0)*+{K_i}="Ki";
(12,0)*+{F_{i-1}}="Fim1";
(0,12)*+{K_i F_{i-1}}="comp";
(0,-12)*+{K_{i-1}}="Kim1";
(15,-12)*+{=K_i \cap F_{i-1}}="e";
 (0,24)*+{F_i}="Fi";
 (0,-24)*+{K}="K";
  {\ar@{-} "comp";"Fi"};
 {\ar@{-} "Kim1";"Ki"};
 {\ar@{-} "Kim1";"Fim1"};
 {\ar@{-} "Ki";"comp"};
 {\ar@{-} "Fim1";"comp"};
 {\ar@{-} "K";"Kim1"};
 \end{xy}
\end{equation}

By induction we find that $K_{i-1}/K$ is Galois.
We will repeatedly
use the fact that $K_i/K_{i-1}$ is Galois and that 
the restriction induces an isomorphism between
 $\gal{K_iF_{i-1}/F_{i-1}}$ and $\gal{K_i/K_{i-1}}$

We now prove
$K_{i}=K_{i-1}$ for even $i$.
According to the short exact sequence
\begin{equation}
  1\rightarrow \gal{K_{i}/K_{i-1}} \rightarrow
  \gal{K_{i}/K}\rightarrow \gal{K_{i-1}/K}\rightarrow 1
\end{equation}
 any composition factor of $\gal{K_{i}/K_{i-1}}$ is a composition
factor
of $\gal{K_{i}/K}$. It is thus a composition factor
of $H=\gal{K(E[p^n])/K}$, because the latter group restricts onto
$\gal{K_{i}/K}$. By \eqref{eq:diagram} and the hypothesis of this lemma
$\gal{K_{i}/K_{i-1}}$ is admissible. 
We can identify $H$ with a subgroup of $\gl{\IZ/p^n\IZ}{2}$, and so we have 
$K_{i}=K_{i-1}$ by Lemma \ref{lem:noAn}, part (1). 

We are left with a contracted tower
$K_0 \subseteq K_2\subseteq K_4\subseteq\cdots\subseteq K_{2k}$
such that ${K_{i}/K_{i-2}}$  has exponent dividing $e$ for even $i$. 
So $\ex{\gal{K_{2k}/K_0}}\mid e^k$. 

We abbreviate 
$\Gamma=\gal{K_{2k}/K}$, which is a quotient of $H$.
 Serre's Theorem \cite{Serre72} implies that there is a constant $p_0$ depending only on $E$
such that $\Gamma = \gl{\IZ/p^n\IZ}{2}$ if $p> p_0$. 
Moreover, if $p\le p_0$ then $[\gl{\IZ/p^n\IZ}{2}:H]$ and $\vert \ker
(H\rightarrow\Gamma) \vert$ are bounded
independently of $n$. 

Let us assume  $p> \max\{4,p_0,e^k\}$. We have a short exact sequence
\begin{equation}
  1\rightarrow \gal{K_{2k}/K_0}\rightarrow \Gamma = \gl{\IZ/p^n\IZ}{2}
  \rightarrow \gal{K_0/K}\rightarrow 1. 
\end{equation}
The group $\gal{K_0/K}$ is abelian since $K_0\subseteq F_0 = \ab{K}$. 
So $\psl{\IF_p}{2}$ is a composition factor 
of $\gal{K_{2k}/K_0}$ by the discussion in Section
\ref{sec:matgrpfacts}.
We conclude 
$\ex{\psl{\IF_p}{2}}\le \ex{\gal{K_{2k}/K_0}}\le e^k$. 
As
\begin{equation}
  \left(
  \begin{array}{cc}
    1 & 1 \\ 0 & 1
  \end{array}
\right) 
\end{equation}
has order $p$ as an element of $\psl{\IF_p}{2}$
we find $p \le e^k$, contradicting our assumption on $p$.  Therefore, $p \ll 1$, where the implied constant here and below only
depends on $E,K,e$ and $k$. 

It remains to show $n\ll 1$. 
Let $H'$ denote the preimage
of $\gal{K_{2k}/K_0}$ under the quotient map
$H\rightarrow \Gamma$.
We use the conclusion of Serre's Theorem 
together with
\begin{equation}
\ex{H'}\le \vert \ker(H\rightarrow\Gamma) \vert \ex{\gal{K_{2k}/K_0}}
\le \vert \ker(H\rightarrow\Gamma) \vert e^k
\end{equation}
to conclude $\ex{H'}\ll 1$. 
As $p$ is bounded too, Lemma \ref{lem:boundH} 
implies $\vert H'\vert \ll 1$. 
Recall that 
$H/H' \cong \Gamma / \gal{K_{2k}/K_0}\cong \gal{K_0/K}$
is abelian. So $ [H,H]\subseteq H'$ and in particular
$\vert [H,H]\vert \le \vert H'\vert \ll 1$. 
We apply Lemma \ref{lem:commutator} to $H$ and recall
 $[\gl{\IZ/p^n\IZ}{2}:H]\ll 1$,
to conclude $n\ll 1$. This concludes the proof of (1).

For part (2) we introduce the auxiliary fields $F'_i = F_i M$ for
$0\le i\le 2k$. Since $M/F_0$ is a finite Galois extension, so is $F'_i/F_i$.
As $F_i/F_{i-1}$ is Galois if $i\ge 1$, we conclude the same for
$F'_i/F'_{i-1}$. 
Moreover, ${\rm Gal}(F'_i/F'_{i-1})$ is isomorphic to
${\rm Gal}(F_i/F'_{i-1}\cap F_i)$, and $F'_{i-1}\cap F_i/F_{i-1}$ is
finite and Galois. 
By hypothesis, a finite Galois extension of
$F'_{i-1}$ inside $F'_i$ is admissible if $2\mid i$ and 
has exponent dividing $e$ if $2\nmid i$. We will apply this
observation in just a moment.

Now suppose $m$ is the order of an element in  $\tors{(\cG(F_{2k})/\cG(M))}$
which is represented by $P\in \cG(F_{2k})$. We abbreviate $L = M(P)$; this
is a subfield of $F_{2k}$ and it fits into the diagram
\begin{equation}\label{eq:diagram2}
\begin{xy}
(-12,0)*+{F'_{i-1}}="Ki";
(12,0)*+{L\cap F'_i}="Fim1";
(0,12)*+{F'_{i-1} \left(L\cap F'_i\right)}="comp";
(0,-12)*+{L \cap F'_{i-1}}="Kim1";
 (0,24)*+{F_i'}="Fi";
 (0,-24)*+{M}="K";
  {\ar@{-} "comp";"Fi"};
 {\ar@{-} "Kim1";"Ki"};
 {\ar@{-} "Kim1";"Fim1"};
 {\ar@{-} "Ki";"comp"};
 {\ar@{-} "Fim1";"comp"};
 {\ar@{-} "K";"Kim1"};
 \end{xy}
\end{equation}

By Lemma \ref{will} the extension $L/M$ is finite abelian, and its exponent is a
multiple of $m$; here we have used our assumption that $\cG(F_0)$ contains all torsion
points of $\cG$. Therefore, $L\cap F'_i/M$ is abelian and so is
$F'_{i-1}(L\cap F'_i)/F'_{i-1}$ by the diagram. 
We recall the observation above. If $2\mid i$, then 
$F'_{i-1}(L\cap F'_i)/F'_{i-1}$ is also admissible. But only the
trivial group is admissible and abelian. Therefore, the extension is
trivial and so $L\cap F'_i = L\cap F'_{i-1}$. 
As in the proof of (1) our tower of fields contracts, i.e. 
\begin{equation}
  M = L\cap F'_0 \subseteq L\cap F'_2 \subseteq
  \cdots \subseteq L\cap F'_{2k} = L\cap F_{2k} = L. 
\end{equation}
If on the other hand we have $2\nmid i$, then 
$L\cap F'_i/L\cap F'_{i-1}$ has exponent dividing $e$. We conclude
that 
$L/M$ has exponent dividing $e^k$ and thus $m\mid e^k$ by the
conclusion of Lemma \ref{will}.

We want to apply Criterion B, Proposition \ref{finreltors}, in order
to prove that $\mcG(F_{2k})/\tors{\mcG(F_{2k})}$ is free abelian. 
We have already verified part (b) of the hypothesis. The extension $F_{2k}/F_0$ is Galois. Therefore, we are left to show that $\mcG(M)/\tors{\mcG(M)}$ is free abelian for every finite Galois extension $M/F_0$, with $M\subseteq F_{2k}$. 
We claim that $M$ satisfies the Bogomolov property with respect to $\mcG$, then Proposition \ref{bfree} concludes the proof.

Let $M=K^{ab}(\alpha)$, then $M$ is contained in an abelian extension of the number field $K(\alpha)$. In the case of $\GG_m$, the Bogomolov property of $M$ follows from \cite[Theorem 1.1]{az1}; for general abelian varieties the result was proved in \cite[Theorem 0.1]{bakersilverman}. 
\end{proof}

Note that part (2) of the previous lemma remains true on replacing admissible by
the weaker condition that all composition factors in the respective Galois
groups are non-abelian.  

\begin{proof}[Proof of Proposition \ref{freethings}]
We will construct a tower
 $\ab{K}=F_0\subseteq F_1\subseteq F_2 \subseteq F_3 \subseteq F_4$
 of fields as in Lemma \ref{lem:finitetornonCM} with $e=60d!$,
such that $F\subseteq F_4$. 

Let $F'$ be a Galois extension of $\ab{K}$ with Galois group isomorphic
to $A_n$ or $S_n$. 
If $n\le 5$, then 
$\ex{\gal{F'/\ab{K}}}\mid 60$ and so $F' \subseteq F_1$.
If $n\ge 6$, then $\gal{F'/\ab{K}}$ is either isomorphic to $A_n$,
which is admissible by the second part of Lemma \ref{lem:noAn}, or $\ab{K}$ has a quadratic
extension contained in $F' \cap F_1$. In either case we find that $F'$
is a subfield of $F_2$. We have showed
$\sa{(\ab{K})}\subseteq F_2$. 
If $F'/\sa{(\ab{K})}$ is an extension of degree at most $d$, 
then its normal closure is an extension of $\sa{(\ab{K})}$ of exponent
dividing $d!$. 
Thus $F' \subseteq F_3$ because $d! \mid e$.

In case (1) of the proposition the torsion $\tors{\cG}$ is defined
over $\ab{K}$, which contains all roots of unity. 
The same is true in case (2), since we assume that the elliptic curve
has CM over $K$. 
These two cases of the proposition follow from Lemma
\ref{lem:finitetornonCM}, part (2).
The remaining case (3) follows from part (1) of the same lemma. 
\end{proof}

In order to prove Theorem \ref{thm:intro} it remains to show that the field $F$ does not have the Bogomolov property in any of the three cases described in this theorem. The proof of this is the content of the next two sections.

\section{Small Points on $\GG_m$}

Let $\kappa$ be a field with a fixed algebraic closure $\overline{\kappa}$. If $f\in\kappa[X]$ is a polynomial, then we denote with $\kappa(f)$ the splitting field of $f$ over $\kappa$ inside $\overline{\kappa}$.

In this section we prove the following more precise version of
Theorem \ref{thm:intro} (1). 

\begin{proposition}
\label{prop:Gmexplicit}
Let $K$ be a number field,  $d$  a positive integer,
 and let $F = \left(\left(K^{ab}\right)^{sa}\right)^{(d)}$.
Then $\IG_m(F)/\tors{\IG_m(F)}$ is free abelian, but $F$ does not satisfy the
 Bogomolov property with respect to the Weil height. 
 \end{proposition}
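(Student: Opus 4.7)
The freeness of $\IG_m(F)/\tors{\IG_m(F)}$ is Proposition \ref{freethings}(1), so it remains to exhibit non-torsion elements of $F^*$ of arbitrarily small Weil height. Following the outline in the introduction, the plan is to use the Osada polynomials $f_n(X) = X^n - X - 1$ and to show that, for $n$ large, each $f_n$ has a root $\alpha_n \in (K^{ab})^{sa} \subseteq F$ with $h(\alpha_n) \to 0$.

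\emph{Height bound.} By Osada's theorem \cite{Osada}, $f_n$ is irreducible over $\QQ$ of degree $n$, so it is the monic minimal polynomial of each of its roots, and therefore
\begin{equation}
n \cdot h(\alpha_n) = \log M(f_n),
\end{equation}
where $M(f_n)$ denotes the Mahler measure. Landau's inequality yields $M(f_n) \le \|f_n\|_2 = \sqrt{3}$, hence $h(\alpha_n) \le (\log 3)/(2n) \to 0$. Moreover $\alpha_n$ is not a root of unity, because otherwise $f_n$ would agree with some cyclotomic polynomial $\Phi_m$, contradicting $f_n(1) = -1$ while $\Phi_m(1) \ge 0$ for all $m \ge 1$. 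Thus each $\alpha_n$ represents a non-torsion class in $\IG_m(\QQbar)/\tors{\IG_m(\QQbar)}$ of positive but vanishing Weil height.

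\emph{Containment.} After replacing $K$ by its Galois closure over $\QQ$ (which only enlarges $F$) we may assume $K^{ab}/\QQ$ is Galois. Let $M_n$ be the splitting field of $f_n$ over $\QQ$; by Osada, $\gal{M_n/\QQ} \cong S_n$. The Galois group of $f_n$ over $K^{ab}$ is
\begin{equation}
H_n := \gal{M_n / \left(M_n \cap K^{ab}\right)},
\end{equation}
and $H_n$ is normal in $S_n$ because $M_n \cap K^{ab}$ is Galois over $\QQ$. For $n \ge 5$ the only normal subgroups of $S_n$ are $\{e\}$, $A_n$, and $S_n$. The possibility $H_n = \{e\}$ would force $\alpha_n \in K^{ab}$; since $K^{ab}$ has the Bogomolov property \cite{az1}, this is incompatible with $h(\alpha_n)$ being small enough, and so is ruled out for all sufficiently large $n$. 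Hence, for those $n$, $H_n \in \{A_n, S_n\}$ acts transitively on the $n$ roots of $f_n$, so $f_n$ remains irreducible over $K^{ab}$ with alternating or symmetric Galois group. By the definition of $(\cdot)^{sa}$ we conclude $\alpha_n \in (K^{ab})^{sa} \subseteq F$.

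The main technical point is the containment step, where one must rule out a collapse of the Galois group in passing from $\QQ$ to $K^{ab}$; this is precisely where the Bogomolov property of $K^{ab}$ is invoked, and it is the only step not reducible to classical elementary considerations. Combining the two steps, the sequence $(\alpha_n)_{n \gg 1}$ furnishes non-torsion elements of $F^*$ whose Weil heights tend to $0$, so $F$ fails the Bogomolov property with respect to $\IG_m$.
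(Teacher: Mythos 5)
The overall strategy matches the paper's (Osada's trinomials $f_n = X^n - X - 1$, height going to zero, a Galois-theoretic argument placing a root inside $(K^{ab})^{sa}$), but there is a genuine gap in the containment step that the paper carefully avoids.

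\emph{The gap.} You open the containment step with ``after replacing $K$ by its Galois closure over $\QQ$ (which only enlarges $F$) we may assume $K^{ab}/\QQ$ is Galois.'' That reduction is not valid. The proposition concerns the specific field $F = \bigl(\bigl(K^{ab}\bigr)^{sa}\bigr)^{(d)}$ attached to the given $K$. If $K'$ denotes the Galois closure, the field $F' = \bigl(\bigl((K')^{ab}\bigr)^{sa}\bigr)^{(d)}$ is in general strictly larger than $F$, so producing small points in $F'$ does \emph{not} show that $F$ fails the Bogomolov property; small points might live in $F' \setminus F$. (Also note the operator $L \mapsto \sa{L}$ is not obviously monotone, so you cannot transfer a root from $\sa{((K')^{ab})}$ back to $\sa{(K^{ab})}$.) Without this Galois assumption, $M_n \cap K^{ab}$ need not be normal over $\QQ$, and the key claim that $H_n$ is normal in $S_n = \gal{M_n/\QQ}$ collapses.

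\emph{How the paper sidesteps this.} The paper works over $K$ rather than over $\QQ$, where $K^{ab}/K$ is automatically Galois. To guarantee that $\gal{K(f_n)/K} \cong S_n$ it invokes Osada's discriminant formula $\disc(f_n) = \pm\bigl(n^n + (-1)^n (n-1)^{n-1}\bigr)$ and restricts to integers $n$ divisible by the discriminant $\Delta$ of $K$: any common prime of $\Delta$ and $\disc(f_n)$ would divide both $n$ and $n-1$, impossible. Hence $K$ and $\QQ(f_n)$ are linearly disjoint over $\QQ$, so $\gal{K(f_n)/K} \cong S_n$. Now $\gal{K^{ab}(f_n)/K^{ab}} \cong \gal{K(f_n)/(K(f_n)\cap K^{ab})}$ is normal in $S_n$, and since its quotient $\gal{(K(f_n)\cap K^{ab})/K}$ is abelian while $S_n$ is not, the normal subgroup must contain $A_n = [S_n, S_n]$, hence equals $A_n$ or $S_n$.

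\emph{Secondary comments.} (i) Your detour through the Bogomolov property of $K^{ab}$ to rule out $H_n = \{e\}$ is logically sound but uses a deep theorem where the elementary derived-subgroup argument above does the job directly; it also forces you to discard small $n$ unnecessarily. (ii) Your Landau/Mahler-measure bound $h(\alpha_n) \le (\log 3)/(2n)$ is correct and slightly sharper than the paper's elementary bound $h(\alpha_n) \le (\log 2)/(n-1)$, and your evaluation-at-$1$ argument that $\alpha_n$ is not a root of unity is fine, though in the paper both of these fall out immediately once one knows $\alpha_n \notin K^{ab}$. These secondary differences are harmless; the containment gap is the substantive issue and should be repaired along the lines sketched above.
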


\begin{proof}
We already know that $\GG_m(F)/\tors{\GG_m(F)}$ is free abelian from Proposition \ref{freethings}.

Let $n\ge 2$ be an integer. A result of Osada, cf. \cite[Corollary 3]{Osada}, states that the polynomial
\begin{equation*}
f_n = X^n-X-1
\end{equation*}
is irreducible over $\QQ$ and has symmetric Galois group. Moreover, the discriminant of $f_n$ is given by
\begin{equation}\label{disc}
\disc(f)=\pm (n^n + (-1)^n (n-1)^{(n-1)}),
\end{equation}
as explained in \cite{masserdisc}.  

Let $\Delta$ be the discriminant of $K$, and let $n\ge 5$ 
be an integer with $\Delta\mid n$. Assume that there is a common prime
divisor $p$  of $\Delta$
 and the discriminant of $\IQ(f_n)$. Then $p$ also divides $\disc(f_n)$ and
 $n$. By \eqref{disc} it follows that $p$ divides $n-1$
 as well. This is not possible, and therefore $\Delta$ and
the discriminant of $\IQ(f_n)$ are coprime. 
In particular $K$ and $\IQ(f_n)$ are linearly disjoint over $\IQ$. Hence,
\begin{equation*}
 \gal{K(f_n)/K} \cong \gal{\IQ(f_n)/ \IQ}\cong S_n.
\end{equation*}
The group $\gal{K^{ab}(f_n)/K^{ab}} \cong \gal{K(f_n)/(K(f_n)\cap
K^{ab})}$ is a normal subgroup of $S_n \cong \gal{K(f_n)/K}$, since
$K^{ab}$ and $K(f_n)$ are Galois extensions of $K$. It is non trivial and therefore isomorphic to $A_n$ or $S_n$. In both cases we find that 
\begin{equation*}
K(f_n)\subseteq (K^{ab})^{sa}.
\end{equation*}

Let $\alpha$ be a root of $f_n$.  Since $\gal{K^{ab}(f_n)/K^{ab}} \cong A_n$ or $S_n$, we know that $\alpha \not \in K^{ab}.$  In particular, $\alpha$ is not a root of unity, and so $h(\alpha)>0$ by Kronecker's Theorem.  Using basic height properties we have
\begin{equation*}
n\cdot h(\alpha) = h(\alpha^n) = h(\alpha+1) \leq \log2 + h(\alpha) + h(1) = \log2 + h(\alpha),
\end{equation*}
which yields
\begin{equation*}
 h(\alpha) \leq \frac{\log2}{n-1}.
\end{equation*}

Since we can take $n$ to be arbitrarily large, this gives us that $F$ does not have the Bogomolov property with respect to the Weil height, and our proof is complete.
\end{proof}

\section{Small Points on Elliptic Curves}
\subsection{Irreducibility}

For any number field $K$, we denote by $\mathcal{O}_K$ the ring of
integers in $K$. We recall that all number fields lie in a fixed
algebraic closure of $\IQ$.

\begin{proposition}
\label{prop:irred}
Let $K$ be a number field and let $\mfp$ be a maximal ideal in
$\mathcal{O}_K$ with residue characteristic $p\geq 3$. If
$f\in\mathcal{O}_K[T,X]$ is monic in $X$, such that $f\equiv X^n - X^2
+ T^s \imod{\mfp}$, where $n\ge 5$ is odd, $p\mid n$, and $2 \mid s$,
then  the following properties hold true. 
\begin{enumerate}
\item The polynomial $f(T^m,X)$ is irreducible as an element of
  $\IQbar(T)[X]$ for all integers $m\ge 1$. 
\item The group $\gal{F(T)(f)/F(T)}$ is isomorphic to $S_n$ or $A_n$
  for all number fields $F\supseteq K$. 
\end{enumerate}
\end{proposition}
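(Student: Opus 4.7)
The plan is to reduce the polynomial $f(T^m,X)$ modulo $\mfp$, invoke a theorem of Abhyankar \cite{Abhyankar} to control its reduction in characteristic $p$, and then lift the resulting irreducibility and Galois information back to characteristic zero. Concretely, I would fix a prime $\mfP$ of $\overline{\IZ}$ lying above $\mfp$, with residue field $\IFbar_p$. Since $f$ is monic in $X$, we have $f(T^m,X)\in\cO_K[T][X]$, and its reduction modulo $\mfP$ equals $\bar f_m(T,X):=X^n-X^2+T^{ms}$. The hypotheses $n\ge 5$ odd, $p\ge 3$ with $p\mid n$, and $2\mid s$ are tailored to Abhyankar's framework for trinomials in positive characteristic; one should obtain irreducibility of $\bar f_m$ over $\IFbar_p(T)$ and sufficient Galois-theoretic information, for instance the presence of an $n$-cycle coming from wild ramification at $T=\infty$, where $p\mid n$ forces $\partial_X\bar f_m=-2X$ to vanish only at $X=0$.

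Part (1) then follows by the standard lifting principle for monic polynomials: any nontrivial monic factorization of $f(T^m,X)$ in $\IQbar(T)[X]$ would descend, by integrality, to the localization of $\overline{\IZ}[T]$ at $\mfP$, and would reduce modulo $\mfP$ to a nontrivial factorization of $\bar f_m$, contradicting its irreducibility.

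For part (2), separability of $\bar f=X^n-X^2+T^s$ in $X$ over $\IFbar_p(T)$ (immediate from $\partial_X\bar f=-2X$, since a common zero with $\bar f$ would force $T^s=0$) yields the usual injection $\gal{\IFbar_p(T)(\bar f)/\IFbar_p(T)}\hookrightarrow\gal{\IQbar(T)(f)/\IQbar(T)}$ via the decomposition group at $\mfP$. Because $\gal{\IQbar(T)(f)/\IQbar(T)}\subseteq\gal{F(T)(f)/F(T)}\subseteq S_n$, it suffices to prove $A_n\subseteq\gal{\IQbar(T)(f)/\IQbar(T)}$. In characteristic zero the derivative $\partial_X f=nX^{n-1}-2X$ has $n-1$ zeros (the extra $n-2$ solutions of $X^{n-2}=2/n$ disappear modulo $\mfp$), and these produce simple ramification points of the cover $\{f=0\}\to\IA^1_T$ that contribute transpositions to the Galois group. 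Combining the $n$-cycle obtained from Abhyankar with these transpositions and a primitivity argument, Wielandt's theorem (a primitive subgroup of $S_n$ containing a transposition is $S_n$) forces the Galois group over $F(T)$ to be $A_n$ or $S_n$.

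The main obstacle is part (2): the reduced Galois group is a $p$-group (because $p\mid n$ kills the leading term of $\partial_X f$ modulo $\mfp$, making the reduced cover \'etale on the finite part of $\IA^1_T$), so reduction alone does not yield $A_n$ or $S_n$. The characteristic-zero critical points of $\partial_X f$ must be analyzed carefully and uniformly in the lift $f$ (which may carry arbitrary corrections in $\mfp\cO_K[T,X]$), and primitivity of the resulting permutation action must be established independently for Wielandt's theorem to apply.
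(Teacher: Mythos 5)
Your treatment of part (1) matches the paper's: reduce $f(T^m,X)$ modulo a prime of $\overline{\IZ}$ above $\mfp$, invoke Abhyankar's irreducibility theorem for the trinomial $X^n-X^2+T^{ms}$ over $\IFbar_p(T)$, and lift by monicity via Gauss's Lemma. That is fine.

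Part (2) contains a decisive error in the characteristic $p$ analysis. You claim that the reduced Galois group $\gal{\IFbar_p(T)(\bar f)/\IFbar_p(T)}$ is a $p$-group, reasoning that $p\mid n$ forces $\partial_X\bar f=-2X$ and hence a small branch locus, so the cover ``must'' have small monodromy. This is characteristic-zero intuition, and it fails in positive characteristic: the \'etale fundamental group of $\IA^1$ over $\IFbar_p$ is enormous, and by Abhyankar's conjecture (proved by Raynaud and Harbater) its finite quotients are exactly the quasi-$p$ groups, which include $A_n$ for every $n\ge p$. In fact the hypotheses $n\ge 5$ odd, $p\mid n$, $2\mid s$ are precisely those under which Abhyankar shows $\gal{\IFbar_p(T)(X^n-X^2+T^s)/\IFbar_p(T)}\cong A_n$; this is the content of \cite[Section~20, (II).1]{Abhyankar}, which is what the paper cites. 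Once you have $A_n$ in characteristic $p$, the decomposition-group inclusion you already set up (packaged in the paper as \cite[Lemma~6.1.1]{FriedJarden05}) gives $|\gal{F(T)(f)/F(T)}|\ge n!/2$, and combined with the embedding into $S_n$ this forces $A_n$ or $S_n$ at once. No characteristic-zero ramification analysis is needed.

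The workaround you sketch would not repair the argument even if the reduced group really were small. You point out the difficulty yourself: $f$ is prescribed only modulo $\mfp$, so $\partial_X f$ is an arbitrary $\mfp$-perturbation of $nX^{n-1}-2X$, and there is no way to control the critical values of the cover $\{f=0\}\to\IA^1_T$, much less to guarantee that the resulting local monodromies are transpositions. The group-theoretic endgame is also misstated: Jordan's theorem (a primitive subgroup of $S_n$ containing a transposition equals $S_n$) cannot deliver the conclusion ``$A_n$ or $S_n$,'' and the $A_n$ case, which genuinely occurs over $\IFbar_p(T)$, contains no transpositions at all.
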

\begin{proof} Let $\IFbar_p$ be an algebraic closure of $\IF_p$. Then
  $X^{n} - X^2 + T^s$ is  irreducible as a polynomial in
  $\IFbar_p(T)[X]$ and separable. For a proof of the former fact
see the first few paragraphs of Section 20 in \cite{Abhyankar}. Our
assumptions on $n$ and $s$ assure that the splitting field of $X^n
-X^2 +T^s \in \IFbar_p(T)[X]$ is a Galois extension of
$\IFbar_p(T)[X]$ with Galois group isomorphic to $A_n$; cf. (II).1 Section 20 in \cite{Abhyankar}.
This polynomial is one of Abhyankar's ``tilde polynomials,'' and the proof of the claimed statement does not require the full classification theorem for finite simple groups. 

The polynomial $f$ from the hypothesis reduces to one of Abhyankar's tilde polynomials, but so do twists by taking powers of $T$. So $f(T^m,X) \imod \mfp$ is irreducible
as an element of $\IFbar_p[T,X]$ for all $m\ge 1$. It follows from the
Gauss Lemma that $f(T^m,X)$ is irreducible as an element of
$\IQbar[T,X]$. This yields part (1).
 
To prove part (2) we set the stage to apply \cite[Lemma
  6.1.1]{FriedJarden05} and use some of the notation introduced before
its statement. Let $F/K$ be a finite extension. Since $\cO_F$ is
integrally closed in $F$, the ring $\cO_F[T]$ is integrally closed in
$F(T)$. Let $\mfP \subseteq \cO_F$ be any prime ideal above
$\mfp$. Then $\mfP [T]$ is a prime ideal of $\cO_F[T]$. The quotient
$\cO_F[T]/\mfP[T]$ is a polynomial ring over a finite field $\IF_q$,
where $q$ is the ideal norm of $\mfP$. Therefore, the quotient field of $\cO_F[T]/\mfP[T]$ is $\widetilde{F}=\IF_q(T)$.

Denote by $L$ the splitting field of $f$ over $F(T)$, and by $G$ the
Galois group of  $L/F(T)$. The integral closure of $\cO_F [T]$ in $L$
contains a prime ideal above $\mfP[T]$. 
We write $\widetilde L$ for the quotient field of the said integral
closure modulo the said prime ideal. 
By construction $L$ contains all roots of $f$. They are integral over $\cO_F [T]$ as $f$ is monic in $X$. Therefore, the reduction $\widetilde f \in \IF_q(T) [X]$ factors completely in $\widetilde{L}[X]$. In particular, $\widetilde{L}$ contains the splitting field $\widetilde{F}(\widetilde{f})$. 

By \cite[Lemma 6.1.1(a)]{FriedJarden05} the extension $\widetilde
L/\widetilde F$ is normal, and a subgroup of $G$ surjects onto its
automorphism group $\aut{\widetilde{L}/\widetilde{F}}$. 
So we have
\begin{equation}
|G| \ge |\aut{\widetilde{L}/\widetilde{F}}|  \ge
|\gal{\widetilde{F}(\widetilde f)/\widetilde{F}}|.
\end{equation}
 On the other hand, $A_n \cong
 \gal{{\IFbar_p(T)}(\widetilde{f})/{\IFbar_p(T)}}$ 
is isomorphic to a subgroup of $\gal{\widetilde{F}(\widetilde{f}) /
   \widetilde{F}}$,
and therefore $|G|\ge n!/2$.
We have proven that $G$ is isomorphic to $S_n$ or $A_n$, concluding our proof.
\end{proof}

\subsection{Hilbert Irreducibility Theorem for Algebraic Groups}

We will use a special case of Dvornicich and Zannier's  Hilbert Irreducibility
Theorem for algebraic groups  to prove the following
proposition.
 
For a subfield $F\subseteq\IQbar$ we let $\cyc{F}$ be  the subfield
of $\IQbar$  obtained by adjoining all roots of unity to  $F$.

\begin{proposition}  
\label{prop:applyzannier}
Let $K$ be a number field. Suppose $f\in K[T,X]$ has degree $n\ge 5$ as a polynomial in $X$ and satisfies the following properties.
\begin{enumerate}
\item  The polynomial $f(T^m,X)$ is irreducible as an element of
  $\IQbar(T)[X]$ for all integers $m\ge 1$. 
\item The group $\gal{F(T)(f)/F(T)}$ is isomorphic to $A_n$ or $S_n$
for all number fields $F\supseteq K$. 
\end{enumerate}
Then for all but finitely many roots of unity $\zeta\in \IQbar$, the specialization $g=f(\zeta,X)$ is irreducible of degree $n$ as an element of $\cyc{K}[X]$, and
$\gal{\cyc{K}(g)/\cyc{K}}$ is isomorphic to $A_n$ or $S_n$. 
\end{proposition}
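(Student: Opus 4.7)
The plan is to invoke the Hilbert irreducibility theorem for the torus $\GG_m$ due to Dvornicich and Zannier \cite{DvZa07}, refined by Zannier \cite{Za10}. For our purposes the relevant statement reads: if $h(T,X) \in K[T,X]$ has the property that $h(T^m,X)$ is irreducible over $\QQbar(T)$ for every integer $m \geq 1$, then $h(\zeta,X)$ is irreducible over $\cyc{K}$ for all but finitely many roots of unity $\zeta \in \QQbar$. Hypothesis (1) is precisely this pullback-irreducibility assumption applied to $h=f$, and thus already yields that $g = f(\zeta,X)$ has degree $n$ and is irreducible over $\cyc{K}$ away from a finite set $E_0$ of roots of unity.

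The Galois group assertion will follow from a standard resolvent construction. Let $M$ be the splitting field of $f$ over $K(T)$ and write $G = \gal{M/K(T)} \in \{A_n,S_n\}$ by hypothesis (2). Consider the finite list, up to conjugacy, of maximal subgroups $H \leq G$ that fail to contain $A_n$. For each such $H$, a primitive element of the fixed field $M^H$ over $K(T)$ produces an irreducible resolvent polynomial $R_H(T,Y) \in K[T,Y]$. If $\gal{\cyc{K}(g)/\cyc{K}}$ did not contain a conjugate of $A_n$, then it would be contained in some conjugate of $H$ for one of these $H$, forcing $R_H(\zeta,Y)$ to factor over $\cyc{K}$. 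Consequently, it suffices to apply the Dvornicich--Zannier theorem to each $R_H$ separately and take the (finite) union of the resulting exceptional sets together with $E_0$; for $\zeta$ outside this union the Galois group of $g$ over $\cyc{K}$ must lie in $\{A_n,S_n\}$.

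The main obstacle is verifying the hypothesis of Dvornicich--Zannier for each resolvent $R_H$, namely that $R_H(T^m,Y)$ is irreducible over $\QQbar(T)$ for all $m \geq 1$. This is equivalent to the generic Galois group of $f(T^m,X)$ over $\QQbar(T)$ acting transitively on $G/H$. Since $\QQbar(T)/\QQbar(T^m)$ is cyclic of degree $m$, the drop from $G$ to this pullback Galois group corresponds to an abelian quotient of $G$; because $n \geq 5$, the group $A_n$ is simple with no nontrivial abelian quotient and the only abelian quotient of $S_n$ is $\ZZ/2$. In either case the pullback Galois group still contains $A_n$. A short group-theoretic check then shows that $A_n$ acts transitively on $G/H$ for each $H$ in our list: if $G = A_n$ this is automatic from maximality, and if $G = S_n$ with $H$ maximal and $H \neq A_n$, then maximality forces $H \not\subseteq A_n$, so $H$ contains an odd permutation and $A_n \cdot H = S_n$. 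With the pullback irreducibility of each relevant $R_H$ secured, the argument closes.
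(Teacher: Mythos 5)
Your proposal takes a genuinely different route from the paper's, and it contains a gap.

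For orientation: the paper does \emph{not} try to rule out small Galois groups via resolvents. After obtaining irreducibility of $B(\zeta,X)$ from \cite[Theorem 2.1]{Za10} (applied to the PB-factor $\rho$ in $\pi=[N]\circ\rho$), it invokes V\"olklein's specialization lemma to conclude that, for all but finitely many $\zeta$, the splitting field $\cyc{K}(\alpha)(u)$ of $B(\zeta,X)$ is Galois over $\cyc{K}(\alpha)$ with group $\gal{L(f)/L}\in\{A_n,S_n\}$, and then observes that $g$ splits in $\cyc{K}(\alpha)(u)$. Thus $\gal{\cyc{K}(\alpha)(g)/\cyc{K}(\alpha)}$ is a \emph{quotient} of $A_n$ or $S_n$, and for $n\ge5$ the only transitive quotients are $A_n$ and $S_n$. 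You instead try to show $\gal{\cyc{K}(g)/\cyc{K}}$ does not sit in any bad \emph{subgroup}; the quotient lattice of $A_n$ and $S_n$ is trivial to describe, while the maximal-subgroup lattice is not, and that is exactly where your argument breaks.

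The gap is in the sentence ``If $\gal{\cyc{K}(g)/\cyc{K}}$ did not contain a conjugate of $A_n$, then it would be contained in some conjugate of $H$ for one of these $H$.'' Your list consists of maximal subgroups $H\le G=\gal{M/K(T)}$ with $A_n\not\le H$. Now hypothesis (2) allows $G=S_n$ while the group shrinks to $A_n$ after a constant-field extension; concretely, $\disc f$ may become a square over $\cyc{K}(T)$ even if it is not one over $K(T)$, and then $\disc g$ is a square in $\cyc{K}$ for all but finitely many $\zeta$, so $\Gamma:=\gal{\cyc{K}(g)/\cyc{K}}\le A_n$. In that situation $\Gamma$ is a \emph{proper} subgroup of $A_n$, hence lies in some maximal subgroup $M$ of $A_n$. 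If $M$ is a \emph{novelty} of $A_n$ — a maximal subgroup with $N_{S_n}(M)=M$, hence not of the form $H\cap A_n$ for any maximal $H<S_n$ — then the only maximal subgroup of $S_n$ containing $M$ is $A_n$ itself. Such a $\Gamma$ is contained in no conjugate of any $H$ on your list, so no $R_H$ detects it; your argument neither rules out this configuration group-theoretically nor supplies resolvents for the maximal subgroups of $A_n$. If you do add those resolvents (built from the geometric Galois group $\gal{\QQbar(T)(f)/\QQbar(T)}$ rather than from $G$), you still need to know that for almost all $\zeta$ the group $\Gamma$ embeds into that geometric group via a decomposition group so that the conjugation can be taken there; this is exactly the step the paper gets from V\"olklein's lemma and which the Dvornicich--Zannier irreducibility corollary alone does not provide. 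Your transitivity check for the DvZa hypothesis on $R_H(T^m,Y)$ is essentially correct, modulo replacing $G$ by the geometric group, but the enumeration step above is a genuine gap.
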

\begin{proof}
 We notice that the hypotheses assure that
  $\gal{\IQbar(T)(f)/\IQbar(T)}$ is isomorphic to $A_n$ or $S_n$. 
This means that the inclusion $\IQbar(T)\subseteq \IQbar(T)(f)$ of function fields induces a covering
$\pi: Y\rightarrow\IG_m$  of degree $n!/2$ or $n!$,
where $Y$ is a geometrically irreducible curve defined over a finite extension
$\cyc{K}(\alpha)$ of $\cyc{K}$.
By \cite[Proposition 2.1]{Za10} we can factor 
\begin{equation}
\pi = [N]\circ\rho,
\end{equation}
where $\rho:Y\rightarrow\IG_m$ is a rational map that
satisfies Zannier's property (PB) from \cite{Za10}, and $N\geq 1$ is an integer. 

Now $[N]$  comes from a function field $L=\cyc{K}(\alpha)(T)
\supseteq \cyc{K}(\alpha)(T^{N})$. This extension is Galois with group $\IZ/N\IZ$ and also  a quotient of $A_n$ or $S_n$. As $n\ge 5$ we must have $N\leq 2$ and again $\gal{L(f)/L}$ is isomorphic to $A_n$ or $S_n$. 

Let us fix a primitive element $U$ with $L(U) = L(f)$ and an
irreducible polynomial $B\in \cyc{K}(\alpha)[T,X]$ with
$B(T,U)=0$. 
We apply \cite[Theorem 2.1]{Za10} to
$\rho:Y\rightarrow\IG_m$. The specialization
 $B(\zeta,X)$ is thus irreducible in $\cyc{K}(\alpha)[X]$ of degree $[L(f):L]$ for all but finitely many roots of unity $\zeta$. 

Next we use a general specialization principle. More precisely, we
apply \cite[Lemma 1.5]{Volklein96} to specialize the first variable in $B$ to $\zeta$.
 In the reference's notation we take $R=\cyc{K}(\alpha)[T]$ and $A$ to be all roots of $B\in L[X]$,
together with all roots of $f\in K(T)[X]$. 
Let $u\in\IQbar$ denote a root of $B(\zeta,X)$. 
After omitting finitely many $\zeta$, 
the extension $\cyc{K}(\alpha)(u)/\cyc{K}(\alpha)$ is also Galois with Galois group isomorphic to $\gal{L(f)/L}$. 
Moreover,
the specialization $g=f(\zeta,X) \in \cyc{K}(\alpha)[X]$ splits in
$\cyc{K}(\alpha)(u)$. 
So $\gal{\cyc{K}(\alpha)(g)/\cyc{K}(\alpha)}$ is
a quotient of $\gal{L(f)/L}$ and thus isomorphic to a quotient of
$A_n$ or $S_n$. As $n\ge 5$, the only possibilities are $A_n$, $S_n$, and the trivial group.
However, by \cite[Corollary 1]{DvZa07} and
hypothesis (1) the polynomial $g\in \cyc{K}(\alpha)[X]$ is irreducible
of degree $n$ after omitting finitely many $\zeta$. We may thus rule out the
trivial group. The Galois group of $\cyc{K}(g)/\cyc{K}$ is
isomorphic to a subgroup of $S_n$ and contains a subgroup isomorphic
to $A_n$ or $S_n$, and therefore $\gal{\cyc{K}(g)/\cyc{K}}$ must also be of this type.
\end{proof}

\subsection{Proof of Theorem \ref{thm:intro} for elliptic curves}

Here we will prove the following result which implies parts (2) and
(3) of Theorem \ref{thm:intro}
\begin{proposition}
\label{prop:ellcurveexplicit}
Let $E$ be an elliptic curve defined over a number field $K$.
We suppose 
\begin{enumerate}
\item that $E$ has CM over $K$
\item or that $E$ does not have CM and $K=\IQ$.
\end{enumerate}
Let
$d\ge 2$ be  a positive integer,
 and let $F = \left(\left(K^{ab}\right)^{sa}\right)^{(d)}$.
Then $E(F)/\tors{E(F)}$ is a free abelian group that does not satisfy the
 Bogomolov property with respect to the N\'eron-Tate height. 
\end{proposition}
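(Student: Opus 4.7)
The freeness of $E(F)/\tors{E(F)}$ was established in Proposition \ref{freethings}, so the remaining task is to exhibit the failure of the Bogomolov property. The plan is to mirror the $\GG_m$ argument of Proposition \ref{prop:Gmexplicit}, replacing the Osada polynomial $X^n-X-1$ by a polynomial $f_p\in\cO_K[T,X]$, monic of degree $n$ in $X$, whose reduction modulo a suitable prime $\mfp$ of $K$ has the Abhyankar trinomial form $X^n-X^2+T^s$ required by Proposition \ref{prop:irred}. Specializing $T$ to a root of unity via Proposition \ref{prop:applyzannier} will then provide roots which are $X$-coordinates of points on $E$ in $F$ of arbitrarily small positive N\'eron--Tate height.

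The first step is to select an infinite set of rational primes $p\ge 5$ such that $E$ has supersingular reduction at some maximal ideal $\mfp$ of $\cO_K$ above $p$. In the CM case such primes are provided by Chebotarev applied to the primes inert in the CM field; in the non-CM case with $K=\IQ$, they are provided by Elkies' theorem as cited in the introduction. Setting $n=p$ ensures that the arithmetic hypotheses of Proposition \ref{prop:irred} ($n$ odd, $n\ge 5$, $p\mid n$) are automatic.

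The crucial construction is the polynomial $f_p\in\cO_K[T,X]$, which encodes the relation
\begin{equation}
[n]P = P + Q_T
\end{equation}
on $E$, where $Q_T$ is a point of $E$ parametrized by $T$ with $X$-coordinate $T^s$ for a fixed even $s\ge 2$. Expanded in Weierstrass coordinates this relation becomes a polynomial equation of degree $n^2$ in $X(P)$, and $f_p$ will be extracted as a degree-$n$ factor. The factorization is made possible modulo $\mfp$ by supersingular reduction: the multiplication-by-$p$ map reduces to the composition of Frobenius and Verschiebung, each purely inseparable of degree $p$, so the degree-$n^2$ equation splits. The main obstacle of the proof is to verify by explicit computation with division polynomials and the Weierstrass addition law that one of the resulting degree-$n$ factors has reduction $X^n-X^2+T^s$ for an appropriate even integer $s$, and to lift this factor to characteristic zero to obtain $f_p$.

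Given $f_p$, Proposition \ref{prop:irred} shows that $f_p(T^m,X)$ is irreducible in $\IQbar(T)[X]$ for every $m\ge 1$ and that the Galois group of its splitting field over $F'(T)$ is $S_n$ or $A_n$ for every number field $F'\supseteq K$. Proposition \ref{prop:applyzannier} then provides, for all but finitely many roots of unity $\zeta$, an irreducible specialization $g_\zeta=f_p(\zeta,X)\in\cyc{K}[X]$ of degree $n$ with Galois group $S_n$ or $A_n$ over $\cyc{K}\subseteq\ab{K}$. A root $\alpha$ of $g_\zeta$ thus lies in $\sa{(\ab{K})}$, and since $d\ge 2$ the point $P=(\alpha,\beta)\in E$, with $\beta$ satisfying the degree-two equation $\beta^2=\alpha^3+a\alpha+b$ over $\ab{K}(\alpha)$, is defined over $F$. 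The equation $[n]P=P+Q_\zeta$ now gives
\begin{equation}
n^2 \hat h(P) = \hat h([n]P) = \hat h(P+Q_\zeta) \le 2\hat h(P)+2\hat h(Q_\zeta)+O(1),
\end{equation}
and since $X(Q_\zeta)=\zeta^s$ is a root of unity, $\hat h(Q_\zeta)$ is bounded uniformly in $\zeta$ and $p$, yielding $\hat h(P)=O(1/n^2)$. The point $P$ is non-torsion: in the CM case, CM theory gives $\tors{E(\IQbar)}\subseteq E(\ab{K})$ while $\alpha\notin\ab{K}$; in the non-CM case, $\tors{E(F)}$ is finite by Proposition \ref{freethings}(3), so for $p$ sufficiently large the newly constructed point $P_p$ must be non-torsion. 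Letting $p\to\infty$ produces points in $E(F)$ of arbitrarily small positive N\'eron--Tate height, as required.
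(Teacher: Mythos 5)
Your overall strategy (construct $f_p\in\cO_K[T,X]$ reducing modulo a supersingular prime to an Abhyankar trinomial, run it through Propositions \ref{prop:irred} and \ref{prop:applyzannier}, then bound the N\'eron--Tate height and argue non-torsion) matches the paper's. However, there is a genuine gap precisely at the step you yourself flag as ``the main obstacle,'' namely producing the polynomial $f_p$.

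Your plan is to encode $[n]P = P + Q_T$ with $x(Q_T)=T^s$, obtain a degree-$n^2$ equation in $x(P)$, and extract a degree-$n$ factor whose reduction modulo $\mfp$ is $X^n - X^2 + T^s$. You do not show that such a factor exists, that it lifts to $\cO_K[T,X]$ with the required reduction, or even that its degree really is $n$. Since $[n]$ has degree $n^2$ and you take $n=p$, you are proposing a degree-$p$ divisor of a degree-$p^2$ relation; there is no reason offered that this factorization exists over $\cO_K$ or that the reduction has trinomial shape. Moreover, addition by $Q_T$ introduces $y$-coordinates and denominators that you would need to control, and lifting a factor from $\IF_q(T)[X]$ to characteristic zero is not automatic. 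The paper sidesteps all of this: rather than encoding an addition relation, it takes $a/b$ to be the $x$-coordinate of $[q^\nu]$ on $E$ (where $\frob_q^{2\nu}=[q^\nu]$ by supersingularity, so $a\equiv X^{q^{2\nu}}$ and $b\equiv 1 \pmod{\mfp}$) and simply \emph{defines}
\begin{equation}
f = X^{2p}\,a - (X^2 - T^2)\,b \in \cO_K[T,X],
\end{equation}
which is visibly monic of degree $n=q^{2\nu}+2p$ in $X$ and has reduction $X^n-X^2+T^2$ by construction. No factorization or lifting is needed, and the parity/divisibility hypotheses of Proposition \ref{prop:irred} ($n$ odd, $p\mid n$, $s=2$ even) are immediate. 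A root $\alpha$ of $f(\zeta,X)$ then directly satisfies $x([q^\nu]Q)=\alpha^{2-2p}-\zeta^2\alpha^{-2p}$ for $Q=(\alpha,\beta)$, which feeds straight into the height bound. Your non-torsion argument and height estimate are fine in spirit, but without an actual construction of $f_p$ the proof does not go through as written.
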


\begin{proof}
Let $E,K,$ and $F$ be as in the theorem. 
We will work implicitly with  a fixed short
Weierstrass equation  for $E$ with coefficients in $\cO_{K}$.

Let us suppose for the moment that $\mfp$ is a prime ideal in $\cO_K$ of norm $q=p^k$, with
$p\geq 5$, where $E$ has good, supersingular reduction $\widetilde E$.  By \cite[Chapter 13, Theorem 6.3]{Husemoeller} a power $\frob_q^{\nu}$ of
the  Frobenius endomorphism  $\frob_q$ of $\widetilde E$ equals
$[p^{\mu}]$, the multiplication-by-$p^{\mu}$ endomorphism, where $\nu,\mu \in\IN$. Taking the degree yields $q^{\nu}=p^{2\mu}$,
and so
\begin{equation*}
\frob_q^{2\nu}=[p^{2\mu}]=[q^{\nu}].
\end{equation*}
The first coordinate in the multiplication-by-$q^{\nu}$ morphism of $E$ is represented by a  quotient $a/b$ of  polynomials
 $a=X^{q^{2\nu}}+\cdots,b=q^{2\nu}X^{q^{2\nu}-1}+\cdots\in\cO_K[X]$. 
By the previous paragraph we have $\frac{a}{b} \equiv X^{q^{2\nu}} \imod \mfp$.
Hence
\begin{equation*}
a\equiv X^{q^{2\nu}} \imod \mfp \quad \text{and} \quad b\equiv 1 \imod \mfp,
\end{equation*}
as $\IF_q[X]$ is factorial. We define the auxiliary
polynomial
\begin{equation*}
  f = X^{2p} a - (X^2-T^2)b \in \cO_K[T,X].
\end{equation*}
It is monic in $X$ of degree $q^{2\nu}+2p$ as $\deg(b)< q^{2\nu}$ and 
\begin{equation*}
  f\equiv X^{q^{2\nu}+2p} - (X^2-T^2) \imod \mfp. 
\end{equation*}
Thus $f$ satisfies the hypothesis of Proposition \ref{prop:irred}. By
Proposition \ref{prop:applyzannier} 
$g(X) = f(\zeta,X)$ is irreducible in $\cyc{K}[X]$
for all but finitely many roots of unity $\zeta$,  and moreover
 $\gal{\cyc{K}(g)/\cyc{K}}$ is isomorphic to $S_{n}$ or $A_n$ with $n=q^{2\nu}+2p$.

Let $\alpha$ be a root of $g$  and let $\beta\in\IQbar$ such that
$Q=(\alpha,\beta)$ lies on  $E$. 
By properties of $g$ we have $\alpha\in (\cyc{K})^{sa}$. Now
$\cyc{K}\subseteq K^{ab}$, and the Galois group of $\cyc{K}(g) / K^{ab}\cap \cyc{K}(g)$ 
is normal in $A_n$ or $S_n$ with abelian quotient, so it is again
$A_n$ or $S_n$ as $n=q^{2\nu}+2p\ge 5$. 
We conclude $\alpha\in K^{ab}(g)\subseteq (K^{ab})^{sa}$.	
Solving for $\beta$  merely involves taking a square root, so 
 $\beta\in ((K^{ab})^{sa})^{(2)} \subseteq F$ with $F$ as in the
hypothesis. Hence
$Q\in E(F)$. 

Since $\alpha$ has degree
$q^{2\nu}+2p$ over $\cyc{K}$, we have $b(\alpha)\neq 0$ and $\alpha\not=0$. 
Set $[q^{\nu}]Q=(\alpha',\beta')$. Then the choice of $f$ yields
\begin{equation*}
  \alpha^{2p}\alpha' = \alpha^2 - \zeta^2.
\end{equation*}
Fundamental properties of the N\'{e}ron-Tate height $\widehat{h}$ on $E$
and of the absolute logarithmic Weil height $h$ imply
\begin{align*}
  q^{2\nu}\widehat{h}(Q)  &= \widehat{h}((\alpha',\beta')) \leq h(\alpha') + c \nonumber \\ 
	 &= h(\alpha^{2-2p}- \zeta^2\alpha^{-2p}) + c \leq h(\alpha^{2p-2})+h(\alpha^{2p})+c+\log{2}  \\
	 &=(4p-2)h(\alpha)+c+\log{2}\leq (4p-2)(\widehat{h}(Q)+c)+c+\log{2} \nonumber \\
	 &\leq 4p\widehat{h}(Q)+4cp+\log{2}
\end{align*}
where $c$ depends only on $E$ and compares the
N\'eron-Tate height to the  height of the $x$-coordinate. As
$q^{2\nu}>2p$ this gives the inequality
\begin{equation*}
 \widehat{h}(Q)\leq\frac{4cp+\log{2}}{q^{2\nu}-4p}
\end{equation*}
Observe that the right-hand side tends to $0$ as $p\rightarrow\infty$. 

In case (2), where the elliptic curve $E$ does not have CM, but is defined over $\QQ$, we know that $E$ has
supersingular reduction at infinitely many primes by Elkies's Theorem
\cite{Elkies87}. 
In case (1),  infinitude follows 
from more classical considerations; cf. \cite[Chapter 13, Theorem 12]{Lang:elliptic}.  Hence the construction above yields a sequence of points
 $Q_1,Q_2,\ldots\in E(F)$ 
with N\'eron-Tate height tending to $0$. This sequence contains
infinitely many pair-wise distinct members as the lower bound in
$[\cyc{K}(Q):\cyc{K}]\ge q^{2\nu}+2p$ tends to $+\infty$ in $p$.  To prove that $F$ does not have the Bogomolov property
relative to $\widehat{h}_E$ it remains to
 show that there are infinitely many non-torsion points among the constructed points $Q_1,Q_2,\dots$.

This is quite easy in case (1), where $E$ has CM. In fact, none of
the 
$Q=(\alpha,\beta)$ constructed above have finite order.
Indeed, let us assume the contrary. The field $K$, and a
fortiori $\cyc{K}$, contains the
CM field of $E$. So $\cyc{K}(\alpha)$ is an abelian extension of $\cyc{K}$.
On the other hand $\cyc{K}(\alpha)\subseteq \cyc{K}(g)$, and the latter
is a Galois extension of $\cyc{K}$ with Galois group isomorphic to
$A_{n}$ or $S_{n}$. 
We use  $n\ge 5$ again to find $[\cyc{K}(\alpha):\cyc{K}]\le 2$ as any abelian
quotient of $A_n$ or $S_n$ has order at most $2$. This contradicts 
$[\cyc{K}(\alpha):\cyc{K}]=q^{2\nu}+2p > 2$, so $Q$ has infinite order
and thus $\widehat h(Q)>0$ by Kronecker's Theorem.  

Finally, if $E$ does not have CM, we already know that $E(F)$ contains only finitely many torsion points, by Proposition \ref{freethings}, completing our proof.
\end{proof} 

\subsection{Open problems}
As usual in this paper let $\mcG$ be $\mathbb{G}_m$ or an elliptic curve defined over a number field $K$. In proving  Propositions \ref{prop:ellcurveexplicit} and \ref{prop:Gmexplicit}, we have seen that
Criteria A and B (Propositions \ref{fintors} and \ref{finreltors}) are not valid if we replace the phrase \textit{is free abelian}
with \textit{satisfies the Bogomolov property with respect to $\mcG$}. 
Inspired by  results of May \cite{May72}, 
we state the following open questions, where the second one is a generalization of \cite[Problem 1.4]{ADZ:propB}.

\begin{question}
Let $F$ be an algebraic extension of $K$ such that every finite extension of $F$ has the Bogomolov property with respect to $\mcG$.
\begin{enumerate}
\item Does $F^{(d)}$ satisfy the Bogomolov property with respect to $\mcG$ for every integer $d$?
\item Does $F^{ab}$ satisfy the Bogomolov property with respect to $\mcG$ if in addition every finite extension of $F$ contains only finitely many roots of unity?
\end{enumerate}
\end{question}

The Bogomolov property is not preserved under finite field
extension. But perhaps it holds under additional restrictions.

\begin{question}
Let $F_0/K$ be an algebraic extension such that $F_0$ has the
Bogomolov property with respect to $\mcG$. Does a finite extension
field $F/F_0$, with $F$ Galois over $K$ and with
$\tors{\mcG(F)}\setminus\tors{\mcG(F_0)}$ a finite set necessarily satisfy the Bogomolov property with respect to $\mcG$?  
\end{question}

Note that none of the assumptions can be removed. The necessity of the finiteness of $\tors{\mcG(F)}\setminus\tors{\mcG(F_0)}$ is due to the failure of the Bogomolov property in the extension $\mathbb{Q}^{tr}(i)/\mathbb{Q}^{tr}$. The necessity of a Galois extension $F/K$ comes from a variation of this extension; cf. \cite[Example 1]{May72}.

\bibliography{ghp}{}
\end{document}